\newcommand\A{\mathrm{A}}
\newcommand\bbF{\mathbb{F}}
\newcommand\C{\mathrm{C}}
\newcommand\D{\mathrm{D}}
\newcommand\M{\mathrm{M}}
\newcommand\Nor{\mathbf{N}}
\newcommand\PSL{\mathrm{PSL}}
\newcommand\Q{\mathrm{Q}}
\newcommand\SL{\mathrm{SL}} \newcommand\Sy{\mathrm{S}} \newcommand\Sym{\mathrm{Sym}}
\newtheorem{theorem}{Theorem}[section]
\newtheorem{lemma}[theorem]{Lemma}
\newtheorem{proposition}[theorem]{Proposition}
\newtheorem{problem}[theorem]{Problem}
\newtheorem{conjecture}[theorem]{Conjecture}
\theoremstyle{definition}
\newtheorem{question}[theorem]{Question}
\newtheorem{construction}[theorem]{Construction}
\newtheorem{example}[theorem]{Example}
\newtheorem*{remark}{Remark}
\begin{document}

\title[Elusive groups from non-split extensions]{Elusive groups from non-split extensions}

\author{Jiyong Chen}
\address{(Chen) School of Mathematical Sciences\\Xiamen University\\Xiamen\\P. R. China }
\email{chenjy1988@xmu.edu.cn}

\author{Melissa Lee}
\address{(Lee) School of Mathematics\\Monash University\\Clayton VIC 3800\\Australia}
\email{melissa.lee@monash.edu}

\author{\DJ or\dj e Mitrovi\' c}
\address{(Mitrovi\' c) Department of Mathematics\\University of Auckland\\Auckland\\New Zealand }
\email{dmit755@aucklanduni.ac.nz}

\author[O'Brien]{E.~A.~O'Brien}
\address{(O'Brien) Department of Mathematics\\University of Auckland\\Auckland\\New Zealand}
\email{e.obrien@auckland.ac.nz}

\author[Xia]{Binzhou Xia}
\address{(Xia) School of Mathematics and Statistics\\The University of Melbourne\\Parkville, VIC 3010\\Australia}
\email{binzhoux@unimelb.edu.au}

\begin{abstract}
A finite transitive permutation group is elusive if it contains no derangements of prime order. These groups are closely related to a longstanding open problem in algebraic graph theory known as the Polycirculant Conjecture, which asserts that no elusive group is $2$-closed. Existing constructions of elusive groups mostly arise from split extensions. In this paper, we initiate the construction of elusive groups via non-split extensions. As a demonstration, we construct elusive groups of new degrees, namely $p^{3k-4}(p+1)/2$ for each Mersenne prime $p\geq7$ and integer $k\geq2$. We also construct the first examples of elusive groups with odd degree, namely $3^{k+1}\cdot5^2$, and twice odd degree, namely $2\cdot3^{k + 1}\cdot5^2$ for each $k\geq1$. We conclude by proposing further problems to advance this new direction of research.

\textit{Key words: elusive group, non-split extension, Polycirculant Conjecture.}

\textit{MSC2020: 20B05, 20B25.}
\end{abstract}

\maketitle

\section{Introduction}

Throughout this paper, all permutation groups act on finite sets.
A classical theorem of Jordan~\cite{Jordan1872} from 1872 states that every transitive permutation group on at least two points has derangements---elements with no fixed points. Jordan’s theorem has found applications in a wide range of mathematical areas (see, for instance, Serre~\cite{Serre2003}), while it admits a simple proof by the elementary orbit counting lemma.

In sharp contrast, a deep strengthening of Jordan’s theorem by Fein, Kantor and Schacher~\cite{FKS1981} in 1981 relies on the Classification of Finite Simple Groups to assert the existence of derangements of prime power order for every transitive permutation group. The condition of prime power order cannot be improved to prime order, since there exist transitive permutation groups with no derangements of prime order.
For example, the only derangements of the Mathieu group $\M_{11}$ in its primitive action on $12$ points are of order $4$ or $8$. An \emph{elusive group} is a transitive permutation group with no derangements of prime order. These groups have attracted significant attention in the literature~\cite{AG2021,DM2011,Giudici2003,Giudici2007,GK2009,GMPV2015,Klin1998,MS1998,Xu2009}.

Recall that the \emph{$2$-closure} of a permutation group $G$ on a set $\Omega$ is the largest subgroup of $\Sym(\Omega)$ that has the same orbits on $\Omega \times \Omega$ as $G$. The group $G$ is \emph{$2$-closed} if $G$ equals its $2$-closure. A major motivation for studying elusive groups arises from the following conjecture of Klin~\cite[Problem~BCC15.12]{Cameron1997} (see also~\cite{CGJKKMN2002}), which extends the earlier Semiregularity Problem posed in~\cite{Marusic1981} and~\cite{Jordan1988}.

\begin{conjecture}[Polycirculant Conjecture]
There are no $2$-closed elusive groups.
\end{conjecture}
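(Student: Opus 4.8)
The statement in question is the Polycirculant Conjecture, which remains one of the central open problems in algebraic graph theory; accordingly, what follows is a strategy outline rather than a route to a complete proof. The natural first move is to translate the conjecture into graph-theoretic language. Since a $2$-closed transitive group $G$ on $\Omega$ is precisely the full automorphism group of the family of its orbital digraphs, and since a derangement of prime order $p$ is automatically semiregular (every cycle has length $p$), the conjecture is equivalent to the assertion that every finite vertex-transitive digraph admits a non-identity semiregular automorphism. I would work throughout with this reformulation, as it makes the target --- producing a single fixed-point-free element of prime order --- concrete and geometric.

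The plan is then to attack a hypothetical minimal counterexample $G$ via the Classification of Finite Simple Groups. First I would dispose of the primitive case. Here the O'Nan--Scott theorem partitions $G$ into affine, almost simple, diagonal, product-action and twisted-wreath types, and one invokes the classification of primitive elusive groups (building on Giudici~\cite{Giudici2003}), which shows that such groups are extremely restricted --- essentially confined to product actions built from $\M_{11}$ acting on $12$ points. One then checks directly that none of these exceptional primitive elusive groups is $2$-closed, by exhibiting a semiregular automorphism of a suitable orbital digraph. Quasiprimitive and biquasiprimitive groups would be handled by an analogous socle analysis, reducing them to the primitive constituents via the embedding theorems.

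The hard part --- and the reason the conjecture is still open --- is the imprimitive case, where the usual inductive machinery breaks down. One would like to pass to a nontrivial block system, factor out the induced action on the blocks, and glue together semiregular elements coming from the block action and from a block stabiliser. The obstruction is that neither elusiveness nor $2$-closure descends to quotients or to the action on a block: a transitive constituent of an elusive group need not be elusive, and the $2$-closure of $G$ is not controlled by the $2$-closures of its constituents. Indeed, the very constructions of this paper, which produce elusive groups from \emph{non-split} extensions whose acting groups are \emph{not} themselves elusive, demonstrate concretely that elusiveness can be manufactured from non-elusive building blocks, so there is no clean minimal-counterexample reduction to the primitive case. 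I expect this failure of descent, together with the problem of controlling the full automorphism group (the $2$-closure) of an arbitrary orbital digraph carrying nontrivial block structure, to be the decisive obstacle; overcoming it in full generality is precisely what a resolution of the conjecture would require.
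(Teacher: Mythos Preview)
The statement you were asked to prove is the Polycirculant Conjecture itself, and the paper does \emph{not} prove it: it is presented explicitly as an open problem (``The Polycirculant Conjecture remains open, although various partial results have been obtained''). So there is no proof in the paper to compare your proposal against. You correctly recognised this and offered a strategy outline rather than a proof, which is the only honest response here.

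Your outline is broadly accurate as a summary of the state of the art. The graph-theoretic reformulation is standard and correct. The quasiprimitive case is indeed settled by Giudici~\cite{Giudici2003}, and the paper cites exactly this. Your identification of the imprimitive case as the core obstruction, and specifically the failure of elusiveness and $2$-closure to pass to quotients or block actions, is precisely the point the paper is illustrating: its non-split constructions show that elusive groups can be built over non-elusive quotients, so no naive induction on a normal series can work. One minor sharpening: for the quasiprimitive elusive groups $\M_{11}\wr K$ on $12^k$ points, one does not merely ``check directly that none is $2$-closed''; rather, one exhibits a semiregular element in the $2$-closure (the wreath product $\Sy_{12}\wr\Sy_k$ sits in the $2$-closure and contains derangements of order, say, $11$). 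But this is a detail, not a gap --- the gap is simply that the conjecture is open, and your proposal rightly does not pretend otherwise.
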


The Polycirculant Conjecture remains open, although various partial results have been obtained; we refer to~\cite{AAS2019} for a detailed survey. Notably, Giudici~\cite{Giudici2003} proved the conjecture for quasiprimitive permutation groups. Consequently, any potential counterexample must have an intransitive minimal normal subgroup. Among permutation groups $G$ with an intransitive minimal normal subgroup $N$, there are known examples that are elusive~\cite{CGJKKMN2002,FKS1981,Giudici2007}. However, in each of these examples, $G$ is a split extension of $N$ by an elusive group, and Theorem~\ref{ThmE}, a generalization of a result of Wielandt, shows that they are not counterexamples to the Polycirculant Conjecture.

Let $\mathcal{E}$ denote the set of degrees of elusive groups. Considerable attention has been devoted in the literature to  $\mathcal{E}$. It is known~\cite{CGJKKMN2002,Giudici2007} that $\mathcal{E}$ contains
\begin{equation}\label{Eqn0}
2^2\cdot 3 \cdot 7^k \ \text{ and }\ 2^n\cdot p^k\cdot q_1\cdots q_r,
\end{equation}
where $p$ is a Mersenne prime, $k$ is a positive integer, $n$ is an integer such that $2^n>p$, each $q_i$ is a prime power not coprime to $p-1$, and $r\geq0$.
Observe that, if $G_1$ and $G_2$ are elusive groups on $\Omega_1$ and $\Omega_2$ respectively, then $G_1\times G_2$ is an elusive group on $\Omega_1\times\Omega_2$. It follows that $\mathcal{E}$ is closed under multiplication, and hence all products of numbers in~\eqref{Eqn0} also lie in $\mathcal{E}$. Nevertheless, these examples yield only a small set of known degrees for elusive groups: in particular, every such number is divisible by both $4$ and a Mersenne prime. This motivates the following open question, posed in~\cite{CGJKKMN2002}:

\begin{question}\label{QueDensity}
Does $\mathcal{E}$ have density zero in $\mathbb{N}$?
\end{question}

To study both Question~\ref{QueDensity} and the Polycirculant Conjecture, it is essential to broaden the constructions of elusive groups, especially to include non-split group extensions. Surprisingly, there is a key advantage in constructing elusive groups via non-split extensions: The permutation group $G$ needs not be elusive to construct an elusive group as a non-split extension of some group $N$ by $G$, since the nonexistence of derangements of certain prime orders in $G$ can possibly be ``overcome'' by the  nonexistence of elements of such prime orders in $(N.G)\setminus N$. This idea is illustrated in detail in Section~\ref{SecIdea} and enables us to construct new numbers in $\mathcal{E}$. Before giving these results, we note that our notation for group extensions follows the Atlas~\cite{CCNPW1985}. In particular, $N{:}G$ and $N{}^{\boldsymbol{\cdot}}G$ represent a split and non-split extension, respectively, of $N$ by $G$.

\begin{theorem}\label{ThmExample3}
For each Mersenne prime $p\geq7$ and integer $k\geq2$, there exists an elusive group of degree $p^{3k-4}(p+1)/2$ which has the form
\[
\big(\underbrace{\C_p^3.\C_p^3.\cdots.\C_p^3}_{k-1\textup{ copies of }\C_p^3}\big){}^{\boldsymbol{\cdot}}\PSL_2(p)
\]
and stabilizer $\C_p^2{:}\D_{p-1}$.
\end{theorem}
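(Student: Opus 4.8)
The plan is to realize the group in the statement concretely as $\PSL_2(\mathbb{Z}/p^k\mathbb{Z})$. The congruence filtration of $\SL_2(\mathbb{Z}/p^k\mathbb{Z})$, whose successive quotients are isomorphic to the adjoint module $\mathfrak{sl}_2(\bbF_p)\cong\C_p^3$, exhibits $\PSL_2(\mathbb{Z}/p^k\mathbb{Z})$ as the iterated non-split extension $(\C_p^3.\cdots.\C_p^3){}^{\boldsymbol{\cdot}}\PSL_2(p)$ with $k-1$ layers, and an order count gives $|\PSL_2(\mathbb{Z}/p^k\mathbb{Z})|=p^{3k-2}(p^2-1)/2$. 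Writing $\phi$ for reduction modulo $p$ and $K_i=\ker(\SL_2(\mathbb{Z}/p^k\mathbb{Z})\to\SL_2(\mathbb{Z}/p^i\mathbb{Z}))$, I would take the point stabilizer to be $H=\C_p^2{:}\D_{p-1}$, where $\D_{p-1}$ is a lift (via Schur--Zassenhaus, as $\gcd(p-1,p)=1$) of the normalizer of a split torus of $\PSL_2(p)$, and $\C_p^2\le K_{k-1}$ is the image of the weight $\pm2$ subspace $\langle e,f\rangle\le\mathfrak{sl}_2(\bbF_p)$ under $X\mapsto I+p^{k-1}X$. Since $\langle e,f\rangle$ is $\D_{p-1}$-invariant, $H$ is a genuine subgroup of order $p^2(p-1)$, so the coset action has degree $p^{3k-4}(p+1)/2$, as required.

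It then remains to prove elusiveness, i.e.\ that every element of prime order $r\mid|G|$ fixes a point; as $p=2^m-1$ is Mersenne, the relevant primes are $p$, $2$, and the odd primes dividing $(p-1)/2$ (note $(p+1)/2=2^{m-1}$). For $r\neq p$ I would argue by reduction: since $K$ is a $p$-group, $\phi$ is injective on any $r$-element, and a Schur--Zassenhaus/Hall-subgroup argument inside $\phi^{-1}(\D_{p-1})=K{:}\D_{p-1}$ shows that an order-$r$ element of $G$ is conjugate into $H$ exactly when its image is conjugate into $\D_{p-1}$ in $\PSL_2(p)$. The latter holds for every $r\neq p$: an element of odd order $r\mid(p-1)/2$ lies in a split torus, hence in $\D_{p-1}$, while the single class of involutions meets the reflections of $\D_{p-1}$ (whose cyclic part has odd order $(p-1)/2$). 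This is precisely where the Mersenne hypothesis enters: it guarantees that no odd prime divides $(p+1)/2$, so that no element of odd prime order is forced into a non-split torus, which would otherwise yield a derangement.

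The heart of the proof is the prime $r=p$, and this is where the non-split structure does the work. First I would show, via the expansion $(I+p^jX)^p\equiv I+p^{j+1}X \pmod{p^{j+2}}$ (valid for $p\ge7$), that an element of $K$ has order $p$ precisely when it lies in $K_{k-1}\setminus\{1\}$, and that any element reducing to a nontrivial unipotent of $\PSL_2(p)$ satisfies $g^p\equiv I+pN\not\equiv\pm I \pmod{p^2}$ and so has order at least $p^2$. Consequently the elements of order $p$ in $G$ are exactly the nonidentity elements of $K_{k-1}\cong\mathfrak{sl}_2(\bbF_p)$: the potential order-$p$ derangements coming from unipotents have been pushed up to order $p^2$. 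Since $[K_1,K_{k-1}]\le K_k=1$, the conjugation action of $G$ on $K_{k-1}$ factors through the adjoint action of $\PSL_2(p)$ on $\mathfrak{sl}_2(\bbF_p)$, so it suffices to check that $\langle e,f\rangle$ meets every nonzero adjoint orbit. I would verify this via the invariant $\det$: the matrix $\begin{pmatrix}0&b\\c&0\end{pmatrix}$ realizes every determinant, covering all regular semisimple classes, while the two $\SL_2(p)$-orbits of nonzero nilpotents are represented by $\begin{pmatrix}0&b\\0&0\end{pmatrix}$ for $b$ a square and a nonsquare, both lying in $\langle e,f\rangle$. Hence every order-$p$ element is conjugate into $\C_p^2\le H$ and fixes a point.

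I expect the main obstacle to be exactly this prime-$p$ analysis: pinning down the order-$p$ elements as precisely the bottom congruence layer (so that the ``overcoming'' of unipotent derangements is rigorous and uniform in $k$), and checking that the single $2$-dimensional space $\langle e,f\rangle$ simultaneously meets all regular semisimple orbits and both nilpotent $\SL_2(p)$-orbits. The computations for $r\neq p$ and the verification that $H$ has the claimed structure are comparatively routine, although some care with small-prime degeneracies is what justifies the hypothesis $p\ge7$.
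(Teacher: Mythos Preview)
Your proposal is correct and follows essentially the same approach as the paper: both realize the group as $\PSL_2(\mathbb{Z}/p^k\mathbb{Z})$ with stabilizer the off-diagonal $\C_p^2$ in the bottom congruence layer extended by a lifted $\D_{p-1}$, handle primes $r\neq p$ by reduction to the $p'$-elusiveness of $\PSL_2(p)$ on cosets of $\D_{p-1}$, and treat $r=p$ by showing that every order-$p$ element lies in $K_{k-1}\cong\mathfrak{sl}_2(\bbF_p)$ and is then conjugate into the off-diagonal subspace. Your packaging in terms of the adjoint module, weight spaces, and the $\det$ invariant is a cleaner conceptual rendering of the paper's explicit matrix computations, and your explicit verification that order-$p$ elements of $K_1$ must lie in $K_{k-1}$ (via $(I+p^jX)^p\equiv I+p^{j+1}X$) fills in a step the paper leaves implicit.
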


\begin{remark}
The group $\C_p^3.\C_p^3.\cdots.\C_p^3$ in Theorem~\ref{ThmExample3} is nonabelian for $k\geq3$, as shown in the remark after the proof of Lemma~\ref{LemN}.
\end{remark}

The next result, to the best of our knowledge, gives the first examples of elusive groups of odd degree and twice odd degree.

\begin{theorem}\label{ThmExample2}
For every integer $k\geq 1$, there exists an elusive group of degree $3^{k + 1}\cdot5^2 $ that has the form $(\C_{3^k}^4\times\C_5^3){}^{\boldsymbol{\cdot}}\A_5$ and stabilizer $(\C_{3^k}^3\times\C_5^2){:}\C_2^2$, and there exists an elusive group of degree $2 \cdot 3^{k + 1} \cdot 5^2 $ that has the form $(\C_{3^k}^4\times\C_5^3){}^{\boldsymbol{\cdot}}\A_5$ and stabilizer $(\C_{3^k}^3\times\C_5^2){:}\C_2$.
\end{theorem}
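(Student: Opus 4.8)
The plan is to work from the standard criterion: a transitive group on the cosets of a subgroup $H$ is elusive exactly when, for every prime $\ell$ dividing the group order, each element of order $\ell$ is conjugate into $H$ (equivalently, $H$ meets every conjugacy class of elements of prime order). Here the common group order is $2^2\cdot3^5\cdot5^4$, so the relevant primes are $2$, $3$ and $5$. Both assertions share the normal subgroup $N=\C_3^4\times\C_5^3$ and the section $H\cap N=\C_3^3\times\C_5^2$; only the top factor ($\C_2^2$ versus $\C_2$) differs. I would therefore construct a single group $G=N{}^{\boldsymbol{\cdot}}\A_5$, in which $\C_3^4$ and $\C_5^3$ carry prescribed $\A_5$-module structures over $\bbF_3$ and $\bbF_5$ and the extension is governed by a suitable class in $H^2(\A_5,N)=H^2(\A_5,\C_3^4)\oplus H^2(\A_5,\C_5^3)$, and then analyse the two stabilizers together, noting that the image of each in $\A_5$ is a Sylow $2$-subgroup $\C_2^2$ (resp.\ a subgroup $\C_2$), containing no element of order $3$ or $5$.

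The prime $2$ is the easy case. Since $N$ has odd order, every involution of $G$ projects to an involution $\bar t\in\A_5$; its preimage $N\langle t\rangle$ is a split extension $N{:}\C_2$ by Schur--Zassenhaus, and all complements—hence all involutions projecting to $\bar t$—are $N$-conjugate. As the involutions of $\A_5$ form a single class, all involutions of $G$ are $G$-conjugate. The colon in $(\C_3^3\times\C_5^2){:}\C_2^2$ and $(\C_3^3\times\C_5^2){:}\C_2$ signals an actual complement, so each stabilizer contains an involution and hence meets this unique class; the prime $2$ is settled at once for both degrees.

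For $\ell\in\{3,5\}$ I would split the order-$\ell$ elements of $G$ into those lying in $N$ and those projecting to a nontrivial $\ell$-element of $\A_5$ (a $3$-cycle or a $5$-cycle). The former are the nonzero vectors of $\C_3^4$ (resp.\ $\C_5^3$), and since $N$ is abelian their $G$-classes are the $\A_5$-orbits; the requirement is that the subspace $\C_3^3$ (resp.\ $\C_5^2$) cut out by $H\cap N$ meet every $\A_5$-orbit on nonzero vectors. This is a finite module-theoretic condition, arranged by choosing the module structures and the $\C_2^2$-invariant subspaces and checking orbit by orbit. The elements projecting to a $3$- or $5$-cycle are the genuinely delicate ones: they cannot be conjugate into $H$, because the image of $H$ in $\A_5$ contains no element of order $3$ or $5$. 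Hence elusiveness forces that no such element exists, and this is exactly what the non-split extension must guarantee.

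To secure the latter I would pass to a Sylow $\ell$-subgroup, an extension of the Sylow $\ell$-subgroup $V$ of $N$ by $\langle c\rangle$ with $c$ an $\ell$-cycle acting on $V$ as $\sigma$. If $g$ lifts $c$ and $v_0=g^\ell\in V$ is the cohomological obstruction, then $(v+g)^\ell=(1+\sigma+\cdots+\sigma^{\ell-1})v+v_0$, and in characteristic $\ell$ one has $1+\sigma+\cdots+\sigma^{\ell-1}=(\sigma-1)^{\ell-1}$ (since $(\sigma-1)^\ell=\sigma^\ell-1=0$ and a binomial computation leaves only the top term). Thus the coset contains an element of order $\ell$ precisely when $v_0\in\mathrm{Im}\,(\sigma-1)^{\ell-1}$. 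For $\ell=5$ the module is $3$-dimensional, so $(\sigma-1)^4=0$ and the condition reduces to $v_0\neq0$: any extension non-split over the Sylow $5$-subgroup forces every element outside $N$ to have order $25$. For $\ell=3$ the image of $(\sigma-1)^2$ is a proper subspace, so I must select the class in $H^2(\A_5,\C_3^4)$ whose Sylow-$3$ obstruction avoids it. The main obstacle is precisely this simultaneous demand: exhibiting one nonzero class in $H^2(\A_5,N)$ that is non-split in the above strong sense for both $\ell=3$ and $\ell=5$, while the subspaces $\C_3^3$ and $\C_5^2$ remain $\C_2^2$-invariant and still meet all module orbits. Verifying that such a class exists—that the relevant cohomology groups are nonzero and the obstruction can be placed outside $\mathrm{Im}\,(\sigma-1)^2$—is the crux, and given the small orders involved I would confirm the resulting group together with its conjugacy classes by explicit construction.
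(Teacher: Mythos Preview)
Your plan is correct and matches the paper's architecture: build $X=N{}^{\boldsymbol{\cdot}}\A_5$ from nonzero classes in $H^2(\A_5,\C_3^4)\oplus H^2(\A_5,\C_5^3)$, use non-splitness over each Sylow $\ell$-subgroup ($\ell\in\{3,5\}$) to rule out order-$\ell$ elements outside $N$, cover the $\A_5$-orbits on $U$ and $V$ by the chosen $\C_2^2$-invariant hyperplanes, and handle $\ell=2$ via the unique involution class in $\A_5$. Two tactical differences are worth noting. First, the paper obtains the absence of order-$\ell$ elements outside $N$ from the Hall--Petrescu identity (as in Lemma~\ref{lem-nop}), whereas your norm computation $(vg)^\ell=(1+\sigma+\cdots+\sigma^{\ell-1})v+v_0$ is more direct here because $N$ is abelian; in fact your criterion ``$v_0\notin\mathrm{Im}(\sigma-1)^{\ell-1}$'' is exactly non-splitness of the cyclic extension, and since restriction $H^2(\A_5,U)\to H^2(\mathrm{Syl}_3,U)$ is injective for a $3$-module $U$, \emph{any} nonzero class works for $\ell=3$ as well---no delicate selection is needed. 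Second, the paper carries out the orbit-covering step explicitly rather than by machine: for $V=\bbF_5^3$ it views $\A_5\cong\Omega_3(5)$ and invokes Witt's Lemma, and for $U=\bbF_3^4$ it lists six $\A_5$-translates of the hyperplane $M\cap U$ and shows by inclusion--exclusion that their union has $81$ elements.
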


After illustrating the idea of constructing elusive groups $N{}^{\boldsymbol{\cdot}}G$ from non-elusive groups $G$ in Section~\ref{SecIdea}, we carry out this idea in Sections~\ref{SecExmp3} and~\ref{SecExmp2}, respectively, to construct elusive groups needed to prove Theorems~\ref{ThmExample3} and~\ref{ThmExample2}. The constructed elusive groups satisfy the conditions of Theorem~\ref{ThmE} and hence are not counterexamples to the Polycirculant Conjecture (see Section~\ref{SecRmk}), although they yield degrees not appearing in~\eqref{Eqn0}. Note that, by the direct product construction of elusive groups, products of these new degrees also belong to $\mathcal{E}$, thereby producing new numbers in $\mathcal{E}$.

We also emphasise that the primary goal of this paper is to initiate the construction of elusive groups as non-split extensions and to demonstrate the advantages of this approach, rather than endeavor to provide new degrees or examples. Our method seems promising to produce more elusive groups, and we believe that this will shed light into the study of elusive groups and the Polycirculant Conjecture. To conclude the paper, we pose some natural problems in Section~\ref{SecRmk}.

\section{Illustration of idea}\label{SecIdea}

For a prime $p$, a transitive permutation group is \emph{$p$-elusive} if it has no derangements of order $p$, and is \emph{$p'$-elusive} if it is $r$-elusive for each prime $r\neq p$.

\begin{lemma}\label{LemExt}
Let $G$ be a $p$-elusive group for some prime $p$ with point stabilizer $H$, and let $X=N.G$ be a transitive permutation group with point stabilizer $Y$ such that $|N|$ is coprime to $p$ and $\overline{Y}=H$, where $\overline{\phantom{w}}\colon X\to G$ is the quotient map modulo $N$. Then $X$ is $p$-elusive.
\end{lemma}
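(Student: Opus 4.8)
The plan is to use the standard dictionary between fixed points and conjugacy: in a transitive action with point stabilizer $Y$, an element $x\in X$ fixes a point if and only if some $X$-conjugate of $x$ lies in $Y$, so $x$ is a derangement precisely when no conjugate of $x$ meets $Y$. Hence, to prove that $X$ is $p$-elusive it suffices to show that every element $x\in X$ of order $p$ is conjugate in $X$ into $Y$.

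First I would pass to the quotient $G=X/N$. Let $x\in X$ have order $p$. The order of $\overline x$ divides $p$, and $\overline x$ cannot be trivial: if $\overline x=1$ then $x\in N$, which is impossible since $N$ has no element of order $p$ as $|N|$ is coprime to $p$. Thus $\overline x$ has order exactly $p$. Because $G$ is $p$-elusive, $\overline x$ fixes a point, that is, $\overline x$ is conjugate in $G$ into $H=\overline Y$. Lifting a conjugating element of $G$ to some $g\in X$ and replacing $x$ by $x^g$, I may assume from the outset that $\overline x\in\overline Y$, equivalently that $x\in YN$.

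It then remains to conjugate $x$, within $YN$, into $Y$ itself, and this is the crux where coprimality is indispensable. I would argue by Sylow theory. The subgroup $\langle x\rangle$ is a $p$-subgroup of $YN$. Since $N\trianglelefteq X$, we have $|YN|=|Y|\,|N|/|Y\cap N|$, and both $|N|$ and $|Y\cap N|$ are coprime to $p$; hence the $p$-part of $|YN|$ equals the $p$-part of $|Y|$, so a Sylow $p$-subgroup $P$ of $Y$ is also a Sylow $p$-subgroup of $YN$. By Sylow's theorem $\langle x\rangle$ is contained in some Sylow $p$-subgroup $Q$ of $YN$, and $Q=P^{h}$ for some $h\in YN$; therefore $\langle x\rangle^{h^{-1}}\leq P\leq Y$, so $x^{h^{-1}}\in Y$ with $h^{-1}\in YN\leq X$.

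Combining the two conjugations shows that the original element $x$ is $X$-conjugate into $Y$, and thus fixes a point. As $x$ was an arbitrary element of order $p$, the group $X$ has no derangement of order $p$, i.e.\ $X$ is $p$-elusive. The only genuinely delicate point is the Sylow step: the absence of $p$-elements in $N$ forces the $p$-local structure of $YN$ to coincide with that of $Y$, which is exactly what permits the descent from $YN$ to $Y$; every other step is formal.
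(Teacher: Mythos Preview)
Your proof is correct and follows essentially the same route as the paper's: pass to the quotient to land in $NY$ using $p$-elusiveness of $G$, then use that a Sylow $p$-subgroup of $Y$ is Sylow in $NY$ (since $p\nmid|N|$) to conjugate into $Y$. The only cosmetic difference is that you spell out the index computation for the Sylow step, whereas the paper states it directly.
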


\begin{proof}
Let $a$ be an element (if any) of order $p$ in $X$. Since $|N|$ is not divisible by $p$, the order of $\overline{a}$ is also $p$.
Since $G$ is $p$-elusive with point stabilizer $H$, there exists $g\in G$ such that $\overline{a}^g\in H$.
This implies the existence of $b\in X$ such that $\overline{a^b}\in H=\overline{Y}$, that is, $a^b\in NY$.
Again, since $|N|$ is not divisible by $p$, every Sylow $p$-subgroup $P$ of $Y$ is a Sylow $p$-subgroup of $NY$.
Hence there exists $c\in NY$ such that $a^{bc}\in P\leq Y$. This shows that $X$ is $p$-elusive.
\end{proof}

By Lemma~\ref{LemExt}, to construct elusive groups as group extensions $N.G$ from a transitive permutation group $G$, we need only ensure that $N.G$ is $p$-elusive for every prime $p$ satisfying one of the following:
\begin{enumerate}[(i)]
\item $G$ is not $p$-elusive;
\item $p$ divides $|N|$.
\end{enumerate}
The likelihood of constructing elusive groups in this way increases when the number of primes $p$ satisfying one of~(i) or~(ii) is small. In fact, the successful constructions for Theorems~\ref{ThmExample3} and~\ref{ThmExample2} occur when this number is one and two, respectively.

\begin{example}\label{Exmp1}
Let $p\geq7$ be a Mersenne prime, and let $G\cong\PSL_2(p)$ be a transitive permutation group with point stabilizer $H\cong\D_{p-1}$, the dihedral group of order $p-1$. Since the prime divisors of $|G|/|H|=p(p+1)/2$ are $p$ and $2$, each Sylow $r$-subgroup of $G$ with $r\notin\{p,2\}$ is conjugate to a subgroup of $H$. Thus each element of prime order $r\notin\{p,2\}$ in $G$ is contained in a conjugate of $H$, which means that $G$ has no derangements of prime order $r\notin\{p,2\}$. Moreover, since $G\cong\PSL_2(p)$ has a unique conjugacy class of involutions and $|H|$ is divisible by $2$, it follows that $G$ has no derangements of prime order $r$ whenever $r\neq p$. In other words, $G$ is $p'$-elusive. Note that $G$ is not elusive as $p$ divides $|G|$ but not $|H|$.
\qed
\end{example}

Assume that $G$ is a $p'$-elusive group. If $G$ is not elusive, which means that condition~(i) holds uniquely for the prime $p$, then for $N.G$ to be elusive, the extension $N.G$ must be non-split.

\begin{lemma}\label{LemNonSplit}
Let $G$ be a transitive group with point stabilizer $H$ such that $G$ is not $p$-elusive for some prime $p$, and let $X$ be a transitive permutation group with point stabilizer $Y$ such that $X=N.G$ for some normal subgroup $N$ and that $\overline{Y}=H$, where $\overline{\phantom{w}}\colon X\to G$ is the quotient modulo $N$. If $X$ is $p$-elusive, then $X=N^{\boldsymbol{\cdot}}G$.
\end{lemma}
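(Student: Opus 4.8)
The plan is to argue by contraposition: we assume the extension $X = N.G$ splits and then produce a derangement of order $p$ in $X$, contradicting the hypothesis that $X$ is $p$-elusive. Recall that in a transitive group an element is a derangement precisely when it is not conjugate into a point stabilizer, and that the notation $X = N^{\boldsymbol{\cdot}}G$ asserts exactly that the extension does not split. Thus the whole content of the lemma is to show that a splitting is incompatible with the $p$-elusiveness of $X$.

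First I would fix a derangement $g \in G$ of order $p$, which exists because $G$ is not $p$-elusive. Suppose for contradiction that the extension splits, so that there is a complement $K \leq X$ with $X = NK$ and $N \cap K = 1$; then $\overline{\phantom{w}}$ restricts to an isomorphism $K \to G$. Let $a \in K$ be the preimage of $g$ under this isomorphism. Because $\overline{\phantom{w}}|_K$ is an isomorphism, $a$ has order exactly $p$.

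The key step is to verify that $a$ is a derangement in $X$. If it were not, then $a$ would be conjugate into the point stabilizer $Y$, say $a^z \in Y$ for some $z \in X$. Applying the quotient map gives $g^{\overline{z}} = \overline{a}^{\overline{z}} = \overline{a^z} \in \overline{Y} = H$, so $g$ would be conjugate into $H$ in $G$, contradicting the choice of $g$ as a derangement of $G$. Hence $a$ is a derangement of $X$ of order $p$, contradicting the $p$-elusiveness of $X$. Therefore no complement exists, and $X = N^{\boldsymbol{\cdot}}G$.

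As for the main obstacle, there is little genuine difficulty once the contrapositive is set up: the argument turns on the single observation that the quotient map carries ``conjugate into $Y$'' to ``conjugate into $H$'', together with the fact that a complement lets us lift $g$ to an element of the same order. The only point requiring a touch of care is ensuring the lifted element has order exactly $p$ rather than a proper multiple of $p$; this is guaranteed precisely because the preimage is taken inside the complement $K$, on which $\overline{\phantom{w}}$ is injective. Note that, unlike Lemma~\ref{LemExt}, no coprimality between $|N|$ and $p$ is assumed or needed here, since we do not reason about arbitrary order-$p$ elements of $X$ but only about the distinguished lift sitting inside $K$.
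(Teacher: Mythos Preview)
Your proof is correct and follows essentially the same approach as the paper: assume the extension splits, lift a derangement $g\in G$ of order $p$ to an element of the complement (where it retains order $p$), and observe that the quotient map carries any conjugation into $Y$ down to a conjugation into $H$, contradicting the choice of $g$. The only cosmetic difference is that the paper identifies $G$ directly with the complement inside $X=N{:}G$ rather than naming it $K$.
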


\begin{proof}
Suppose for a contradiction that $X=N{:}G$ is $p$-elusive. Since $G$ is not $p$-elusive, there exists some $g\in G$ with $|g|=p$ such that no conjugates of $g$ in $G$ lie in $H$. However, since $g\in G\leq N{:}G=X$ while $X$ is $p$-elusive, there exists $x\in X$ such that $g^x\in Y$. It follows that $\overline{g}^{\overline{x}}\in\overline{Y}=H$ with $\overline{g}=g$ and $\overline{x}\in G$, contradicting that no conjugates of $g$ in $G$ lie in $H$.
\end{proof}

In the setting of Lemma~\ref{LemNonSplit}, it is still possible that $N.G$ is elusive when $|N|$ is divisible by $p$.
For a prime $p$ and transitive permutation groups $G$ and $N.G$, we say that $N.G$ \emph{overcomes $p$ for $G$} if $N.G$ is $p$-elusive while $G$ is not.
Let us illustrate the possibility of such a situation by proving the following special case of Theorem~\ref{ThmExample3}.

\begin{proposition}\label{ThmExample1}
For each Mersenne prime $p\geq7$, there exists an elusive group of degree $p^2(p+1)/2$ which has the form $\C_p^3{}^{\boldsymbol{\cdot}}\PSL_2(p)$ and stabilizer $\C_p^2{:}\D_{p-1}$.
\end{proposition}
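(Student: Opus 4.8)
The plan is to take $G=\PSL_2(p)$ acting with point stabilizer $H\cong\D_{p-1}$ exactly as in Example~\ref{Exmp1}, so that $G$ is $p'$-elusive but not $p$-elusive, the prime $p$ being the unique obstruction to elusiveness. For the module I would let $N=\C_p^3$ be the $3$-dimensional irreducible $\bbF_pG$-module realising the symmetric square of the natural $\SL_2(p)$-module; equivalently, $N$ is the natural module of $\PSL_2(p)\cong\Omega_3(p)$, carrying a $G$-invariant quadratic form $Q$. Writing $N$ in the basis $e=x^2,\ g=xy,\ f=y^2$, a generator $u$ of a Sylow $p$-subgroup $P=\langle u\rangle$ of $G$ acts as a single Jordan block, with $N^{\langle u\rangle}=\langle e\rangle$ and $e,f$ a hyperbolic pair spanning the unique $H$-invariant $2$-space $W=\langle e,f\rangle$. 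The goal is to build a non-split extension $X=N{}^{\boldsymbol{\cdot}}G$ together with a point stabilizer $Y=W{:}H$ of degree $p^2(p+1)/2$, and then to show that the non-split structure forces $X$ to be $p$-elusive, so that $X$ overcomes $p$ for $G$ in the sense of the paragraph preceding the statement.

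The main step is the existence of the non-split extension, for which I would compute $\mathrm{H}^2(G,N)$. Since the Sylow $p$-subgroups of $G$ are trivial-intersection subgroups, the restriction map identifies the $p$-part of $\mathrm{H}^2(G,N)$ with the $\mathbf{N}_G(P)/P$-invariants of $\mathrm{H}^2(P,N)$. As $p\ge7$, the relative norm $1+u+\cdots+u^{p-1}$ vanishes on $N$, so $\mathrm{H}^2(P,N)\cong N^{\langle u\rangle}=\langle e\rangle$ is $1$-dimensional. The torus $T=\mathbf{N}_G(P)/P$ conjugates $u$ to $u^{a^2}$ while scaling $e$ by $a^2$; tracing these two scalars through the cyclic description of $\mathrm{H}^2(P,N)$, they cancel, so $T$ acts trivially and $\mathrm{H}^2(G,N)\cong\bbF_p\neq0$. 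Choosing a nonzero class then yields a non-split $X=N{}^{\boldsymbol{\cdot}}G$ whose Sylow $p$-subgroup $\hat P=N\langle t\rangle$ of order $p^4$ is itself non-split, with a lift $t$ of $u$ satisfying $t^p=e\neq0$. The key consequence is that every element of $\hat P\setminus N$ has order $p^2$: using $t^p=e$ and the vanishing of the norm, one computes $(vt^i)^p=ie\neq0$ for all $v\in N$ and $i\not\equiv0\pmod p$. Hence every element of order $p$ in any Sylow $p$-subgroup, and so every element of order $p$ in $X$, lies in $N$.

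With $X$ in hand I would fix $Y$ as follows. The preimage of $H$ in $X$ is $N.H$ with $\gcd(|N|,|H|)=1$, so by Schur--Zassenhaus it splits as $N{:}H$; taking a complement together with the $H$-invariant plane $W$ gives $Y=W{:}H\cong\C_p^2{:}\D_{p-1}$ with $\overline Y=H$ and $Y\cap N=W$. A direct order count gives $|X{:}Y|=p^2(p+1)/2$, and the action is faithful because the core of $Y$ lies in $N$ (as $H$ is core-free in the simple group $G$) and meets $N$ in $\bigcap_{g\in G}W^g=0$, the latter being a proper submodule of the irreducible $N$. To finish $p$-elusiveness, it remains to show every nonzero $v\in N$ is $G$-conjugate into $W$, which reduces to the classical fact that $\PSL_2(p)$ is transitive on the binary quadratic forms of each fixed discriminant $D$: for $D\neq0$ there is a single orbit, and for $D=0$ two orbits distinguished by the square class of the leading coefficient, while $W=\langle x^2,y^2\rangle$ already contains a vector $\alpha x^2+\gamma y^2$ of every discriminant $-4\alpha\gamma$ together with representatives of both singular classes. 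Thus every order-$p$ element of $X$ is conjugate into $W\le Y$, so $X$ is $p$-elusive.

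Finally, for each prime $r\neq p$ the group $G$ is $r$-elusive by Example~\ref{Exmp1} and $|N|=p^3$ is coprime to $r$, so Lemma~\ref{LemExt} gives that $X$ is $r$-elusive; combined with the previous paragraph, $X$ is elusive of the required form, degree and stabilizer (and is necessarily non-split by Lemma~\ref{LemNonSplit}, consistent with the construction). I expect the genuine difficulty to be concentrated in the cohomological step: establishing that $\mathrm{H}^2(G,N)\neq0$ and that the resulting non-split extension converts the lifts of $p$-elements into elements of order $p^2$ is precisely the mechanism by which $p$ is overcome, and it is what distinguishes this construction from the split examples ruled out by Lemma~\ref{LemNonSplit}. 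The module-theoretic transitivity statement is the second essential input, but is routine via Witt's theorem.
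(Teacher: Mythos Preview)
Your proof is correct and follows essentially the same route as the paper: identify $G=\PSL_2(p)\cong\Omega_3(p)$ acting on $N=\bbF_p^3$ with $H$-invariant hyperbolic plane $W$, set $Y=W{:}H$ inside a non-split $X=N{}^{\boldsymbol{\cdot}} G$, deduce $p'$-elusiveness from Lemma~\ref{LemExt}, and obtain $p$-elusiveness by showing that every element of order $p$ in $X$ lies in $N$ and that every nonzero vector of $N$ is $G$-conjugate into $W$ by Witt's theorem (your discriminant/orbit description is exactly this, rephrased for $\mathrm{Sym}^2$).

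The only substantive differences are in how two ingredients are obtained. For $\mathrm{H}^2(G,N)\neq0$ the paper simply cites~\cite{UoGVAG2012}, whereas you compute it via the TI property of the Sylow $p$-subgroups and the identification $\mathrm{H}^2(P,N)\cong N^P$; your ``cancellation'' claim is correct, since one checks directly that $t\mapsto wt^{a^2}$ (any $w\in N$) extends to an automorphism of $\hat P$ covering the torus action, using $(wt^{a^2})^p=e^{a^2}$ together with the torus scaling $e\mapsto a^2e$. For the implication ``$X$ non-split $\Rightarrow$ no order-$p$ elements in $X\setminus N$'' the paper argues contrapositively via Gasch\"utz's theorem (this is the step \eqref{ItemHyp1}$\Rightarrow$\eqref{ItemHyp2} of Lemma~\ref{LemHyp}), while you use the injectivity of restriction already established in your cohomology computation to pass to a non-split $\hat P$ and then compute $(vt^i)^p=e^i$ explicitly. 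These are matters of self-containment rather than strategy: your version supplies short proofs where the paper appeals to the literature, and both arrive at the same Lemma~\ref{LemHyp}-style conclusion.
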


Let $G\cong\PSL_2(p)$ for some Mersenne prime $p\geq7$, and let $H\cong\D_{p-1}$ be a subgroup of $G$, as in Example~\ref{Exmp1}. Then $G$ is $p'$-elusive with stabilizer $H$. Consider the action of $G$ on $N\coloneqq\C_p^3$ by viewing $G=\Omega_3(p)$ acting on the space $N=\bbF_p^3$. Then $H$ is $\Omega_2^+(p){:}\C_2$ and stabilizes a $2$-dimensional subspace $M$ of $N$. Let $X=N.G=\C_p^3.\PSL_2(p)$ be an extension of $N$ by $G$ with this action of $G$, and let $Y=M.H=M{:}H=\C_p^2{:}\D_{p-1}$ be a subgroup of $\Nor_X(M)=N.H=N{:}H$ (note that $|N|$ and $|H|$ are coprime).

\begin{lemma}\label{LemHyp}
Let $X=N.G$ and $Y=M{:}H$ be as above. Then $Y$ is core-free in $X$. Moreover, the following are equivalent:
\begin{enumerate}[{\rm(a)}]
\item\label{ItemHyp1} $X=N^{\boldsymbol{\cdot}}G$;
\item\label{ItemHyp2} $X\setminus N$ has no elements of order $p$;
\item\label{ItemHyp3} the transitive permutation group $X$ with stabilizer $Y$ is elusive.
\end{enumerate}
\end{lemma}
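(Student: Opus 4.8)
The plan is to establish the core-free claim first, then prove the three-way equivalence by a cycle of implications, leaning heavily on the structural setup that $|N|=p^3$ is a $p$-group while $|H|=p-1$ is coprime to $p$, and that $G\cong\PSL_2(p)$ is $p'$-elusive (Example~\ref{Exmp1}).

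For the \emph{core-free} assertion, I would argue that the kernel of the action of $X$ on the coset space $[X{:}Y]$ is the largest normal subgroup of $X$ contained in $Y$. Since $N$ is normal in $X$, any such normal subgroup $K$ satisfies $K\cap N\trianglelefteq X$; but $N$ is a minimal normal subgroup in the relevant sense (or one checks that $X$ acts irreducibly on $N=\bbF_p^3$ via $G=\Omega_3(p)$), so $K\cap N$ is either trivial or all of $N$. The latter is impossible because $Y\cap N=M$ is a proper $2$-dimensional subspace, so $K\cap N=1$, forcing $K\leq C_X(N)$ and then $\overline{K}\trianglelefteq G$ with $\overline{K}\leq\overline{Y}=H$. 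As $H$ is core-free in the simple group $G\cong\PSL_2(p)$, we get $\overline{K}=1$, whence $K\leq N$ and thus $K\leq N\cap Y=M$; but $M$ is not normal in $X$ (its $G$-orbit under $\Omega_3(p)$ is nontrivial), giving $K=1$.

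For the equivalence, the most efficient route is the cycle \ref{ItemHyp3}$\Rightarrow$\ref{ItemHyp1}$\Rightarrow$\ref{ItemHyp2}$\Rightarrow$\ref{ItemHyp3}. The implication \ref{ItemHyp3}$\Rightarrow$\ref{ItemHyp1} is immediate from Lemma~\ref{LemNonSplit}: if $X$ with stabilizer $Y$ is elusive then in particular it is $p$-elusive, and since $G$ is not $p$-elusive while $\overline{Y}=H$, that lemma forces $X=N^{\boldsymbol{\cdot}}G$. For \ref{ItemHyp1}$\Rightarrow$\ref{ItemHyp2}, I would observe that any element of order $p$ in $X\setminus N$ would project to an element of order $p$ in $G$ (as $p\nmid|N|$ is false here, so one must be more careful); instead, an element $x\in X\setminus N$ of order $p$ would generate, together with a Sylow analysis, a complement-like subgroup. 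More precisely, if $x\in X\setminus N$ has order $p$, then $\langle N,x\rangle/N$ is a nontrivial $p$-subgroup of $G\cong\PSL_2(p)$; choosing $x$ so that $\overline{x}$ has order $p$ and noting $\langle x\rangle\cap N=1$, the subgroup $N\langle x\rangle$ realizes a Sylow $p$-subgroup of $G$ as a complement over $N$ inside the Sylow $p$-subgroup $N\langle x\rangle$ of $X$, which by Gaschütz-type splitting would propagate to a splitting $X=N{:}G$, contradicting \ref{ItemHyp1}.

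The implication \ref{ItemHyp2}$\Rightarrow$\ref{ItemHyp3} is where the real content lies, and I expect it to be the main obstacle. Assuming $X\setminus N$ has no elements of order $p$, I must show $X$ is elusive, i.e.\ $r$-elusive for every prime $r$. For primes $r\neq p$, since $G$ is $p'$-elusive and $|N|=p^3$ is coprime to $r$, Lemma~\ref{LemExt} applies directly with $G$ playing the role of the $r$-elusive group and $\overline{Y}=H$, yielding $r$-elusiveness of $X$. It remains to handle $r=p$, the one prime $G$ fails to be elusive for. Here I would argue that any element of order $p$ in $X$ lies in $N$ by hypothesis~\ref{ItemHyp2}, and every element of $N=\C_p^3$ lies in the stabilizer of the fixed point corresponding to $Y$ precisely because $N$ acts with fixed points under the point stabilizer structure; concretely, since $Y=M{:}H$ contains $M\leq N$ and the $X$-conjugates of $Y$ cover the relevant Sylow $p$-subgroups, every order-$p$ element of $N$ is conjugate into some point stabilizer. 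The delicate point to verify carefully is that no element of $N\setminus\{1\}$ is a derangement---equivalently that each nonidentity vector of $\bbF_p^3$ is fixed by some conjugate of the $2$-dimensional subspace $M$, i.e.\ lies in some $G$-translate of $M$. Since $G=\Omega_3(p)$ acts transitively on the nonzero isotropic vectors and the complement must be covered via the union of the $H$-invariant plane $M$ and its translates, establishing this covering is the crux, and I would reduce it to showing that the $G$-orbit of $M$ exhausts all nonzero vectors of $N$, after which $p$-elusiveness of $X$ follows.
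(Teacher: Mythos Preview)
Your overall architecture matches the paper's: core-freeness via irreducibility of $N$ as an $\bbF_pG$-module, the cycle \ref{ItemHyp3}$\Rightarrow$\ref{ItemHyp1} via Lemma~\ref{LemNonSplit}, \ref{ItemHyp1}$\Rightarrow$\ref{ItemHyp2} via Gasch\"{u}tz, and \ref{ItemHyp2}$\Rightarrow$\ref{ItemHyp3} by combining Lemma~\ref{LemExt} for primes $r\neq p$ with a geometric covering argument for $r=p$. Your core-free argument is more verbose than needed (once you have $K\cap N=1$ and $K\leq N$ you are done; the detour through $M$ not being normal is superfluous), but it is correct.

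The genuine gap is in \ref{ItemHyp2}$\Rightarrow$\ref{ItemHyp3} at the prime $p$. You correctly reduce to showing that every nonzero vector of $N=\bbF_p^3$ lies in some $G$-translate of $M$, and you flag this as ``the crux,'' but you stop short of supplying the argument. Transitivity on isotropic vectors alone is not enough: you must also cover all anisotropic vectors, and for that you need a reason why $M$ meets every $\Omega_3(p)$-orbit on $N\setminus\{0\}$. The paper's missing ingredient is this: since $H=\Omega_2^+(p){:}\C_2$ stabilizes $M$, the plane $M$ is a \emph{non-degenerate} $2$-subspace of \emph{plus type}, and such a plane contains vectors of every norm $\mu\in\bbF_p$. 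Given any nonzero $x\in N$ of norm $\mu$, pick $m\in M$ of the same norm; Witt's Lemma then furnishes an element of $G$ carrying $x$ to $m\in M<Y$. Without this observation about the form type of $M$, the covering claim is unsubstantiated, and the implication does not go through.
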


\begin{proof}
Since the action of $G=\Omega_3(p)$ on $N=\C_p^3$ is irreducible, $N$ is the unique minimal normal subgroup of $X$, and so $Y$ is core-free in $X$.

\eqref{ItemHyp1}$\Rightarrow$\eqref{ItemHyp2}: Suppose for a contradiction that $X\setminus N$ has an element $x$ of order $p$. Since $|X|_p=|N|_p|G|_p=p^4$, the group $X$ has a Sylow $p$-subgroup $P:=N\langle x\rangle=N{:}\langle x\rangle$. Thus, by a theorem of Gasch\"{u}tz (see also~\cite[Main~Theorem~I.17.4]{Huppert2025}), $N$ has a complement in $X$, contradicting~\eqref{ItemHyp1}.

\eqref{ItemHyp2}$\Rightarrow$\eqref{ItemHyp3}:
Since $G$ is $p'$-elusive and $|N|=p^3$, we conclude by Lemma~\ref{LemExt} that $X$ is $p'$-elusive, and so it remains to prove that $X$ is $p$-elusive. Let $x$ be an element of order $p$ in $X$. Then $x\in N$ by~\eqref{ItemHyp2}. Identify $N$ with the orthogonal space $\mathbb{F}_p^3$ that $G=\Omega_3(p)$ acts on. In this way, $H=\Omega_2^+(p){:}\C_2$ stabilizes the $2$-subspace $M$. It follows that $M$ is a non-degenerate $2$-subspace of plus type and hence contains vectors of norm $\mu$ for each $\mu\in\mathbb{F}_p$. We conclude from Witt's Lemma that $x$, as a vector in $N$, is mapped into $M$ by some element of $G$. This means that $x$ is conjugate in $X$ to some element of $M<Y$. So $X$ is $p$-elusive, as desired.

\eqref{ItemHyp3}$\Rightarrow$\eqref{ItemHyp1}:
This follows immediately from Lemma~\ref{LemNonSplit}.
\end{proof}

Note that the natural action of $\Omega_3(p)$ on $\mathbb{F}_p^3$ is the unique irreducible representation of $\Omega_3(p)\cong\PSL_2(p)$ on $\mathbb{F}_p^3$, which is isomorphic to the symmetric square of the natural module $\mathbb{F}_p^2$ of $\SL_2(p)$ (see~\cite[Proposition~5.4.11]{KL1990}). In the notation of~\cite{UoGVAG2012}, this is the module $L(2\omega_1)$ with $\omega_1$ being the only fundamental dominant weight for $A_1(\bbF_p)\cong\PSL_2(p)$, and it is shown in~\cite[Theorem~1.2.5]{UoGVAG2012} that $\mathrm{H}^2(A_1(\bbF_p),L(2\omega_1))$ has dimension $1$.

\begin{proof}[\rm\textbf{Proof of Proposition~\ref{ThmExample1}}]
Let $G=\Omega_3(p)\cong\PSL_2(p)$ for some Mersenne prime $p\geq7$, and let $N=\C_p^3$, viewed also as the vector space $\bbF_p^3$ that $G$ naturally acts on. Since $\mathrm{H}^2(G,N)$ has dimension $1$, there exists a non-split extension $X=N^{\boldsymbol{\cdot}}G$ as in~\eqref{ItemHyp1} of Lemma~\ref{LemHyp}, and so there exists an elusive group $X=N^{\boldsymbol{\cdot}}G=\C_p^3{}^{\boldsymbol{\cdot}}\PSL_2(p)$ with stabilizer $Y=\C_p^2{:}\D_{p-1}$.
\end{proof}

In the remainder of the section, we give a more precise construction of the elusive groups $\C_p^3{}^{\boldsymbol{\cdot}}\PSL_2(p)$ asserted by Proposition~\ref{ThmExample1}. This not only supplements the above proof of Proposition~\ref{ThmExample1}, but also presents the idea used in Section~\ref{SecExmp2}.

\begin{construction}\label{Exmp3}
Let $p\geq7$ be a Mersenne prime, and let
\begin{align*}
X=\langle&a,b,c,s,t\mid a^p=b^p=c^p=[a,b]=[b,c]=[c,a]=t^2=(st)^3=(s^2ts^\frac{p+1}{2}t)^3=1,\\
&s^p=a,\, a^s=a,\,b^s=ab,\,c^s=a^{-1}b^{-2}c,\,a^t=c^{-1},\,b^t=b^{-1},\,c^t=a^{-1}\rangle.
\end{align*}
It is clear that $N:=\langle a,b,c\rangle$ is a normal subgroup of $X$. Let $\,\overline{\phantom{\varphi}}\,$ be the quotient map from $X$ to $G:=X/N$.
\qed
\end{construction}

We will see that $X$ and $N$ in the above construction satisfy $X/N\cong\PSL_2(p)$ with the action of $X/N$ on $N$ equivalent to that of $\Omega_3(p)$ on $\bbF_p^3$. Moreover, we will show that $X\setminus N$ has no elements of order $p$, which enables us to conclude by Lemma~\ref{LemHyp} that $X$ is an elusive group with stabilizer $Y$ described in the lemma. We will work within a Sylow $p$-subgroup of $X$ to prove that $X\setminus N$ has no elements of order $p$. Consider
\begin{align}\label{Eqn4}
W=\langle&a,b,c,s\mid a^p=b^p=c^p=[a,b]=[b,c]=[c,a]=1,\nonumber\\
&s^p=a,\,a^s=a,\,b^s=ab,\,c^s=a^{-1}b^{-2}c\rangle.
\end{align}
It is straightforward to verify that the correspondences
\[
a\mapsto x^p,\ \ b\mapsto x^{\frac{p(p-1)}{2}}y^{-1},\ \ c\mapsto z^{-2},\ \ s\mapsto x
\]
induce an isomorphism (with inverse induced by $x\mapsto s$, $y\mapsto a^{(p-1)/2}b^{-1}$ and $z\mapsto c^{(p-1)/2}$) from $W$ to
\begin{equation}\label{Eqn5}
\langle x,y,z\mid x^{p^2}=y^p=z^p=[y,z]=1,\,[x,y]=x^p,\,[x,z]=y\rangle.
\end{equation}
The group~\eqref{Eqn5} is III(xi) in the table of groups of order $p^n$ in \cite[\S73]{Burnside1897}. In particular, it has order $p^4$, and so does $W$.

\begin{lemma}\label{lem-nop}
Let $W$ be the group defined in~\eqref{Eqn4} for an odd prime $p$. Then $W\setminus \langle a,b,c\rangle$ has no elements of order $p$.
\end{lemma}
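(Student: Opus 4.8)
The plan is to realise $W$ as an extension of the elementary abelian group $N=\langle a,b,c\rangle$ by a cyclic group of order $p$, and then to compute directly the $p$-th power of an arbitrary element lying outside $N$, showing it is nontrivial. First I would record the structure. The relations $a^p=b^p=c^p=[a,b]=[b,c]=[c,a]=1$ make $N$ a quotient of $\C_p^3$, and since $|W|=p^4$ by the identification with Burnside's group~\eqref{Eqn5}, it is exactly $\C_p^3$; it is normal in $W$ because $\{a,b,c\}$ is closed under conjugation by $s$, and $W/N=\langle\overline{s}\rangle$ is cyclic of order $p$ since $s^p=a\in N$. Consequently every element of $W\setminus N$ has the form $s^i n$ with $1\le i\le p-1$ and $n\in N$, so it suffices to prove $(s^i n)^p\neq1$ for all such $i$ and $n$.

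Next I would encode the conjugation action linearly. Writing $N$ additively as $\bbF_p^3$ with basis $a,b,c$, the map $\sigma\colon n\mapsto n^s=s^{-1}ns$ is determined by $\sigma(a)=a$, $\sigma(b)=a+b$, $\sigma(c)=-a-2b+c$, i.e.\ $\sigma=I+\nu$ with $\nu=\sigma-I$ nilpotent of degree $3$ (a single Jordan block, so $\nu^3=0$ while $\nu^2\neq0$). Since $N$ is abelian and normal and $(s^i)^p=a^i$, the collection formula gives $(s^i n)^p=a^i\prod_{j=0}^{p-1}n^{(s^i)^j}$ in $W$, and using $n^{(s^i)^j}=\sigma^{ij}(n)$ this reads, additively,
\[
(s^i n)^p=ia+\Big(\sum_{j=0}^{p-1}\sigma^{ij}\Big)(n).
\]
Thus the statement reduces to showing that the operator $S_i:=\sum_{j=0}^{p-1}\sigma^{ij}$ vanishes on $N$ for every $i$ with $1\le i\le p-1$: granting this, $(s^i n)^p=ia\neq0$, so $s^i n$ has order $p^2$ and no element of $W\setminus N$ has order $p$.

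Finally I would compute $S_i$. Writing $\sigma^i=I+\nu_i$ with $\nu_i=\sigma^i-I$, the operator $\nu_i$ is a polynomial in $\nu$ with zero constant term, hence again nilpotent with $\nu_i^3=0$. Expanding $(I+\nu_i)^j$ by the binomial theorem and summing via the hockey-stick identity kills every term past $\nu_i^2$, leaving
\[
S_i=\binom{p}{1}I+\binom{p}{2}\nu_i+\binom{p}{3}\nu_i^2 .
\]
The heart of the matter is then the arithmetic fact that each binomial coefficient here vanishes modulo $p$: $\binom{p}{1}$ and $\binom{p}{2}$ do so for every odd $p$, while $\binom{p}{3}\equiv0\pmod p$ requires $p\nmid6$, i.e.\ $p\geq5$. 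This is the only point at which the size of $p$ enters, and it is precisely where one needs $p\geq5$ (the relevant Mersenne primes satisfy $p\geq7$); with it, $S_i\equiv0$ over $\bbF_p$ and the proof is complete. I expect the genuine obstacle to be bookkeeping rather than conceptual: pinning down the correct collection formula in this non-abelian extension and verifying $\nu^3=0$, after which the conclusion follows mechanically from the vanishing of binomial coefficients modulo $p$.
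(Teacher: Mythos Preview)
Your argument is correct for $p\geq 5$ and takes a genuinely different route from the paper. The paper invokes the Hall--Petrescu identity $d^p(s^i)^p=(ds^i)^p c_2^{\binom{p}{2}}\cdots c_p$ together with the bound on the nilpotency class of $W$ to kill $c_4,\ldots,c_p$ and reduce to $c_2^{\binom{p}{2}}c_3^{\binom{p}{3}}=1$. You instead exploit the abelian-by-cyclic structure directly: the elementary identity $(s^i n)^p=(s^i)^p\prod_{j=0}^{p-1}n^{(s^i)^j}$, valid because $N$ is abelian and normal, lets you treat the problem as linear algebra over $\bbF_p$ via the unipotent operator $\sigma=I+\nu$. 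Both approaches land on the same arithmetic crux, the vanishing of $\binom{p}{1},\binom{p}{2},\binom{p}{3}$ modulo $p$, but yours is self-contained and avoids any appeal to commutator calculus or the lower central series, while the Hall--Petrescu template would transfer more readily to extensions with non-abelian kernel.

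Your caveat that $\binom{p}{3}\equiv 0\pmod p$ needs $p\geq 5$ is well placed, and in fact the lemma as stated fails at $p=3$: over $\bbF_3$ one has $S_1=\nu^2\neq 0$, and a short calculation gives $(sc^2)^3=a+\nu^2(2c)=a-4a=0$, so $sc^2\in W\setminus N$ has order $3$. The paper's proof carries the same hidden restriction, since for $p=3$ the Hall--Petrescu term $c_3^{\binom{3}{3}}=c_3$ need not be trivial. This does not affect the paper's applications, which only use Mersenne primes $p\geq 7$, but your explicit flagging of the hypothesis $p\geq 5$ is an improvement in precision.
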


\begin{proof}
Suppose for a contradiction that $W\setminus \langle a,b,c\rangle$ has an element of order $p$. Let $ds^i$ be such an element, where $d\in \langle a,b,c\rangle$ and $i\in\mathbb{Z}$. According to the Hall-Petrescu identity (see~\cite[Theorem~III.9.4]{Huppert2025}),
\[
d^p(s^i)^p=(ds^i)^pc_2^{\binom{p}{2}}\cdots c_{p-1}^{\binom{p}{p-1}} c_p
\]
for some $c_2,\ldots,c_p$ such that, for $i\in\{2,\ldots,p\}$, the element $c_i$ lies in the $i$-th term $W_i$ of the lower central series of $W$. Since $|W|$ divides $p^4$, the nilpotency class of $W$ is at most $3$ (indeed, the nilpotency class of $W$ is precisely $3$, but we do not need this fact here), and so $c_4=\dots=c_p=1$. Note that $W_3\leq W_2=W'\leq \langle a,b,c\rangle$. It follows from $(ds^i)^p=1$ that
\[
a^i=(s^i)^p=d^p(s^i)^p=c_2^{\binom{p}{2}}c_3^{\binom{p}{3}}=1,
\]
which implies that $i$ is divisible by $p$. However, this leads to $ds^i=da^{i/p}\in\langle a,b,c\rangle$, contradicting our assumption.
\end{proof}

Now let $X$, $N$, $G$ and $\,\overline{\phantom{\varphi}}\,$ be as in Construction~\ref{Exmp3}. Denote $S=\overline{s}$ and $T=\overline{t}$. Then
\[
G=\overline{X}=\langle S,T\mid S^p=T^2=(ST)^3=(S^2TS^\frac{p+1}{2}T)^3=1\rangle,
\]
and so we conclude by~\cite[Theorem~A]{BM1968} that $G\cong\PSL_2(p)\cong\Omega_3(p)$.
With $N$ viewed as the vector space $\bbF_p^3$, the action $\psi$ of $G=X/N$ on $N$ is given by
\[
S^\psi=
\begin{pmatrix}
1&0&0\\
1&1&0\\
-1&-2&1
\end{pmatrix}
\ \text{ and }\ T^\psi=
\begin{pmatrix}
0&0&-1\\
0&-1&0\\
-1&0&0
\end{pmatrix}.
\]
Let $Q$ be the quadratic form on $N$ such that
\[
Q(\alpha,\beta,\gamma)=4\alpha\gamma+\beta^2
\]
for all $(\alpha,\beta,\gamma)\in\bbF_p^3=N$. It is straightforward to verify that $S^\psi$ and $T^\psi$ preserve $Q$, so $\langle S^\psi,T^\psi\rangle\leq\mathrm{GO}(N,Q)$, the general orthogonal group acting on the space $N$ and preserving the quadratic form $Q$. Since $G=\langle S,T\rangle\cong\Omega_3(p)$,
\[
\langle S^\psi,T^\psi\rangle=\Omega(N,Q)=\Omega_3(p).
\]
In other words, the action of $G$ on $N$ is that of $\Omega_3(p)$ on $\bbF_p^3$.
Moreover, as the Sylow $p$-subgroups of $G\cong\PSL_2(p)$ have order $p$, the group $W$ is a Sylow $p$-subgroup of $X$. Lemma~\ref{lem-nop} implies that $X\setminus N$ has no elements of order $p$, and so by Lemma~\ref{LemHyp}, $X$ is an elusive group with stabilizer $Y$ described there.

\section{Elusive groups of degree $p^{3k-4}(p+1)/2$}\label{SecExmp3}

Recall that, for a prime number $p$ and positive integer $k$, the residue class ring $\mathbb{Z}/ p^k\mathbb{Z}$ of integers modulo $p^k$ has multiplicative group $(\mathbb{Z}/p^k\mathbb{Z})^{\times}\cong\C_{p^{k-1}(p-1)}$. The two-dimensional special linear group $\SL_2(\mathbb{Z}/p^k\mathbb{Z})$ over $\mathbb{Z}/p^k\mathbb{Z}$ is defined as
\[
\SL_2(\mathbb{Z}/p^k\mathbb{Z})=\left\{\left.
\begin{pmatrix}
 a&b\\ c&d
\end{pmatrix}
\,\right|\,  a,b,c,d \in \mathbb{Z}/ p^k\mathbb{Z},\,ad-bc=1 \right\}.
\]
It is well known and fairly easy to check that the modular group $\SL_2(\mathbb{Z})$ is generated by
\[
\begin{pmatrix}
1&1\\0&1
\end{pmatrix}
\ \text{ and }\
\begin{pmatrix}
 0&1\\-1&0
\end{pmatrix}
\]
(see~\cite[Exercise~1.1.1]{DS2005}), and so is $\SL_2(p)$ (see~\cite[Theorem~2.8.4]{Gorenstein1980}). This implies that the natural ring epimorphism $\mathbb{Z}\to\mathbb{F}_p$ induces an epimorphism from  $\SL_2(\mathbb{Z})\to \SL_2(p)$. As a consequence, the homomorphism
\[
\varphi\colon\SL_2(\mathbb{Z}/p^k\mathbb{Z})\to\SL_2(p)
\]
induced by the natural ring epimorphism $\mathbb{Z}/p^k\mathbb{Z}\to\mathbb{F}_p$ is also an epimorphism. Denote by $I$ the $2\times2$ identity matrix (over any field).

\begin{construction}\label{Cons1}
Let $R=\mathbb{Z}/p^k\mathbb{Z}$ be the residue class ring of integers modulo $p^k$ for some odd prime $p$ and integer $k\ge 2$, and let $\widehat{X}=\SL_2(R)$. Take an element $w$ of order $p-1$ in $R^{\times}$. Let
\begin{align*}
P&=\left\{ \left.I+p^{k-1}
\begin{pmatrix}
 0&b\\c&0
\end{pmatrix}
\,\right|\, b,c\in R \right\}\cong\C_p^2,\\
Q&=\left\langle
\begin{pmatrix}
 w&0\\0&w^{-1}
\end{pmatrix},
\begin{pmatrix}
 0&1\\-1&0
\end{pmatrix}
\right\rangle
=
\left\langle
\begin{pmatrix}
 w&0\\0&w^{-1}
\end{pmatrix}
\right\rangle
{:}
\left\langle
\begin{pmatrix}
 0&1\\-1&0
\end{pmatrix}
\right\rangle
\cong\Q_{2(p-1)},
\end{align*}
and $\widehat{Y}=\langle P,Q\rangle$. Then $\widehat{Y}=P{:}Q\cong\C_p^2{:}\Q_{2(p-1)}$.
Let $\varphi\colon\widehat{X}=\SL_2(R)\to\SL_2(p)$ be the epimorphism induced by the natural ring epimorphism $R\to\mathbb{F}_p$,
let $\widehat{N}=\ker(\varphi)$, and let $Z$ be the subgroup of $\widehat{X}$ consisting of $\pm I$.
Clearly, $Z\cong\C_2$ is normal in $\widehat{X}$. Let $X$, $Y$ and $N$ be the images of $\widehat{X}$, $\widehat{Y}$ and $\widehat{N}$ under the quotient map $\widehat{X}\to\widehat{X}/Z$; that is, $X=\widehat{X}/Z$, $Y=\widehat{Y}/Z$ and $N=\widehat{N}Z/Z$.
\qed
\end{construction}

For $2\times2$ matrices $A=(a_{i,j})$ and $B=(b_{i,j})$ over $\mathbb{Z}/p^k\mathbb{Z}$ and for $\ell\in\{1,\ldots,k\}$, denote
\[
A\equiv B\ \,(\bmod\,p^\ell)
\]
if $a_{i,j}\equiv b_{i,j}\ (\bmod\,p^\ell)$ for each $i,j\in\{1,2\}$. While the following lemma is closely related to results in~\cite[III\S17]{Huppert2025}, we provide a self-contained treatment here to establish the specific properties required for our analysis.

\begin{lemma}\label{LemN}
With the notation in Construction~$\ref{Cons1}$,
\[
N\cong\widehat{N}=\underbrace{\C_p^3.\C_p^3.\cdots.\C_p^3}_{k-1\textup{ copies of }\C_p^3}
\]
and $Y\cap N=PZ/Z\cong\C_p^2$.
\end{lemma}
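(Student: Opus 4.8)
The plan is to realise $\widehat N=\ker\varphi$ as the principal congruence subgroup of level $p$ in $\SL_2(R)$, to read off its structure from the standard congruence filtration, and then to transfer everything across the quotient by $Z=\{\pm I\}$, taking care that $-I\notin\widehat N$ while $-I\in\widehat Y$.

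First I would record that $\widehat N=\{A\in\SL_2(R):A\equiv I\ (\bmod\,p)\}$ and introduce the congruence subgroups $K_\ell=\{A\in\SL_2(R):A\equiv I\ (\bmod\,p^\ell)\}$ for $\ell=1,\dots,k$, giving a normal series $\widehat N=K_1\trianglerighteq K_2\trianglerighteq\cdots\trianglerighteq K_k=1$. For $1\le\ell\le k-1$ the map $I+p^\ell B\mapsto B\ (\bmod\,p)$ is a homomorphism $K_\ell\to\M_2(\bbF_p)$ with kernel $K_{\ell+1}$, since $(I+p^\ell B)(I+p^\ell C)\equiv I+p^\ell(B+C)\ (\bmod\,p^{\ell+1})$ as $2\ell\ge\ell+1$; and because $\det(I+p^\ell B)\equiv 1+p^\ell\operatorname{tr}(B)\ (\bmod\,p^{\ell+1})$, the image lies in the trace-zero matrices, a $3$-dimensional $\bbF_p$-space isomorphic to $\C_p^3$. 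As $\varphi$ is surjective with $|\SL_2(R)|=p^{3(k-1)}|\SL_2(p)|$, we have $|\widehat N|=p^{3(k-1)}$; since the series has $k-1$ factors, each embedding in $\C_p^3$, an order count forces every factor to be exactly $\C_p^3$, yielding $\widehat N=\C_p^3.\C_p^3.\cdots.\C_p^3$ with $k-1$ copies.

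Passing to the quotient by $Z$, I would note that $p$ is odd, so $-I\not\equiv I\ (\bmod\,p)$ and hence $\widehat N\cap Z=1$; therefore $N=\widehat N Z/Z\cong\widehat N/(\widehat N\cap Z)\cong\widehat N$, which settles the first assertion. For the second, since $-I=\left(\begin{smallmatrix}0&1\\-1&0\end{smallmatrix}\right)^2\in Q\le\widehat Y$ we have $Z\le\widehat Y$, and the correspondence theorem gives $Y\cap N=(\widehat Y\cap\widehat N Z)/Z$, so it suffices to prove $\widehat Y\cap\widehat N Z=PZ$. As $k\ge2$, every element of $P$ is $\equiv I\ (\bmod\,p)$, so $P\le\widehat N$ and thus $PZ\le\widehat Y\cap\widehat N Z$. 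For the reverse inclusion I would analyse $Q$: the diagonal element $\operatorname{diag}(w^j,w^{-j})$ reduces to $\operatorname{diag}(\bar w^{\,j},\bar w^{-j})$, which is $I$ only when $(p-1)\mid j$ and is $-I$ only when $j\equiv(p-1)/2$, in which case $w^{(p-1)/2}=-1$ in $R$ because $R^\times$ is cyclic with unique involution $-1$; the off-diagonal elements of $Q$ reduce to off-diagonal matrices, never $\pm I$. Hence the only elements of $Q$ reducing to $I$ and to $-I$ are $I$ and $-I$ themselves, so in particular $Q\cap\widehat N=1$. Writing an arbitrary $y\in\widehat Y\cap\widehat N Z$ as $y=uq$ with $u\in P\le\widehat N$ and $q\in Q$, the condition $y\equiv\pm I\ (\bmod\,p)$ forces $\varphi(q)=\pm I$, whence $q\in\{I,-I\}$ and $y\in P\cup(-P)=PZ$. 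Finally the diagonal entries of any element of $P$ equal $1\neq-1$, so $-I\notin P$ and $P\cap Z=1$, giving $Y\cap N=PZ/Z\cong P\cong\C_p^2$.

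I expect the main obstacle to be the careful handling of the central factor $Z$. Because $-I\in\widehat Y$ but $-I\notin\widehat N$, the intersection $Y\cap N$ acquires the coset $-P$ and equals $PZ/Z$ rather than the image of $P$ alone; getting this right hinges on the two cases $\varphi(q)=I$ and $\varphi(q)=-I$, and on the fact that the diagonal element of $Q$ reducing to $-I$ is genuinely $-I$ in $\SL_2(R)$, which uses the cyclicity of $R^\times$.
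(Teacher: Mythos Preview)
Your proof is correct and follows the same congruence-filtration strategy as the paper: both introduce the subgroups $K_\ell=\{A\in\SL_2(R):A\equiv I\ (\bmod\,p^\ell)\}$ (the paper calls them $N_\ell$), identify the successive quotients with $\C_p^3$, and then handle the passage modulo $Z$ and the intersection $\widehat Y\cap\widehat N Z$. The execution differs in one place worth noting: to show $K_\ell/K_{\ell+1}\cong\C_p^3$, the paper writes down explicit generators $A_\ell,B_\ell,C_\ell$ and checks commutation and spanning by direct matrix computation, whereas you observe that $I+p^\ell B\mapsto B\bmod p$ is a homomorphism into the trace-zero matrices over $\bbF_p$ and then force equality by the global order count $|\widehat N|=p^{3(k-1)}$; your argument is cleaner but relies on knowing $|\SL_2(R)|$ in advance, while the paper's hands-on approach is self-contained. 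For $Y\cap N$ the two arguments are equivalent: the paper asserts $Q\cap\widehat N=1$ and applies Dedekind's modular law (using $Z\le\widehat Y$), while you unpack this by explicitly determining which elements of $Q$ reduce to $\pm I$, which amounts to proving both $Q\cap\widehat N=1$ and $Q\cap\widehat N Z=Z$ in one stroke.
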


\begin{proof}
For $\ell\in\{1,\ldots,k\}$, consider the homomorphism $\widehat{X}=\SL_2(R)\to\SL_2(\mathbb{Z}/p^\ell\mathbb{Z})$ induced by the natural ring epimorphism $R=\mathbb{Z}/p^k\mathbb{Z}\to\mathbb{Z}/p^\ell\mathbb{Z}$, and let $N_\ell$ be its kernel. Then
\[
N_\ell=\{A\in\SL_2(R)\mid A\equiv I\ (\bmod\,p^\ell)\}
\]
is a normal subgroup of $\widehat{X}$, and we have a normal series
\begin{equation}\label{Eqn1}
\widehat{N}=N_1\vartriangleright N_2\vartriangleright\cdots\vartriangleright N_{k-1}\vartriangleright N_k=1.
\end{equation}

Fix $\ell\in\{1,\ldots,k-1\}$, and take elements
\[
A_\ell=
\begin{pmatrix}
 1+p^\ell&p^\ell\\-p^\ell&1-p^\ell
\end{pmatrix},\ \
B_\ell=
\begin{pmatrix}
 1&p^\ell\\0&1
\end{pmatrix},\ \
C_\ell=
\begin{pmatrix}
 1&0\\p^\ell&1
\end{pmatrix}
\]
in $\SL_2(R)$. Clearly, $A_\ell,B_\ell,C_\ell\in N_\ell$. Let $\varphi_\ell$ be the quotient map $N_\ell\to N_\ell/N_{\ell+1}$. Since
\begin{align*}
A_\ell B_\ell&\equiv
\begin{pmatrix}
 1+p^\ell&2p^\ell\\-p^\ell&1-p^\ell
\end{pmatrix}
\equiv B_\ell A_\ell\ \,(\bmod\,p^{\ell+1}),\\
A_\ell C_\ell&\equiv
\begin{pmatrix}
 1+p^\ell&p^\ell\\0&1-p^\ell
\end{pmatrix}
\equiv C_\ell A_\ell\ \,(\bmod\,p^{\ell+1}),\\
B_\ell C_\ell&\equiv
\begin{pmatrix}
 1&p^\ell\\p^\ell&1
\end{pmatrix}
\equiv C_\ell B_\ell\ \,(\bmod\,p^{\ell+1}),
\end{align*}
the subgroup $\langle A_\ell^{\varphi_\ell},B_\ell^{\varphi_\ell},C_\ell^{\varphi_\ell}\rangle$ of $N_\ell/N_{\ell+1}$ is abelian. Note
\[
A_\ell=I+p^\ell
\begin{pmatrix}
 1&1\\-1&-1
\end{pmatrix},\ \
B_\ell=I+p^\ell
\begin{pmatrix}
 0&1\\0&0
\end{pmatrix},\ \
C_\ell=I+p^\ell
\begin{pmatrix}
 0&0\\1&0
\end{pmatrix}.
\]
We conclude that $\langle A_\ell^{\varphi_\ell},B_\ell^{\varphi_\ell},C_\ell^{\varphi_\ell}\rangle=\langle A_\ell^{\varphi_\ell}\rangle\times\langle B_\ell^{\varphi_\ell}\rangle\times\langle C_\ell^{\varphi_\ell}\rangle\cong\C_p^3$.
For each $A\in N_\ell$, since $A\equiv I\ (\bmod\,p^\ell)$,
\[
A=I+p^\ell
\begin{pmatrix}
 a&b\\c&d
\end{pmatrix}
\]
for some $a,b,c,d\in R$. Since $\det(A)=1$, it follows that $p^\ell(a+d)=p^{2\ell}(bc-ad)$, and so $a+d\equiv0\ (\bmod\,p)$. As a consequence,
\[
A\equiv I+p^\ell
\begin{pmatrix}
 a&b\\c&-a
\end{pmatrix}
\ \,(\bmod\,p^{\ell+1}).
\]
This together with the observation
\begin{align*}
A_\ell^aB_\ell^{b-a}C_\ell^{c+a}&\equiv
\left(I+p^\ell a
\begin{pmatrix}
 1&1\\-1&-1
\end{pmatrix}
\right)
\left(I+p^\ell(b-a)
\begin{pmatrix}
 0&1\\0&0
\end{pmatrix}
\right)
\left(I+p^\ell(c+a)
\begin{pmatrix}
 0&0\\1&0
\end{pmatrix}
\right)\\
&\equiv I+p^\ell\left(a
\begin{pmatrix}
 1&1\\-1&-1
\end{pmatrix}
+(b-a)
\begin{pmatrix}
 0&1\\0&0
\end{pmatrix}
+(c+a)
\begin{pmatrix}
 0&0\\1&0
\end{pmatrix}\right)\\
&\equiv I+p^\ell
\begin{pmatrix}
 a&b\\c&-a
\end{pmatrix}
\ \,(\bmod\,p^{\ell+1})
\end{align*}
implies that $A^{\varphi_\ell}=(A_\ell^{\varphi_\ell})^a(B_\ell^{\varphi_\ell})^{b-a}(C_\ell^{\varphi_\ell})^{c+a}\in\langle A_\ell^{\varphi_\ell},B_\ell^{\varphi_\ell},C_\ell^{\varphi_\ell}\rangle$. Therefore,
\begin{equation}\label{Eqn2}
N_\ell/N_{\ell+1}=\langle A_\ell^{\varphi_\ell},B_\ell^{\varphi_\ell},C_\ell^{\varphi_\ell}\rangle\cong\C_p^3.
\end{equation}

Combining~\eqref{Eqn1} and~\eqref{Eqn2}, we obtain
\[
\widehat{N}=(N_{k-1}/N_k).(N_{k-2}/N_{k-1}).\cdots.(N_1/N_2)=\underbrace{\C_p^3.\C_p^3.\cdots.\C_p^3}_{k-1\textup{ copies of }\C_p^3}.
\]
In particular, $|\widehat{N}|$ is odd and hence coprime to $|Z|$. Consequently, $N=\widehat{N}Z/Z\cong\widehat{N}$.

Finally, it follows from $P\leq\widehat{N}$ and $Q\cap\widehat{N}=1$ that $\widehat{Y}\cap\widehat{N}=(PQ)\cap\widehat{N}=P$. This leads to $\widehat{Y}\cap(\widehat{N}Z)=(\widehat{Y}\cap\widehat{N})Z=PZ$ and hence
\[
Y\cap N=(\widehat{Y}/Z)\cap(\widehat{N}Z/Z)=\big(\widehat{Y}\cap(\widehat{N}Z)\big)/Z=PZ/Z\cong P\cong\C_p^2.\qedhere
\]
\end{proof}

\begin{remark}
For $k\geq3$, the group $\widehat{N}$ is nonabelian. This can be seen, for example, from the following calculation:
\begin{align*}
\begin{pmatrix}1+p& p\\ -p & 1-p\end{pmatrix}\begin{pmatrix}1&p\\0&1\end{pmatrix}&=\begin{pmatrix}1+p&2p+p^2\\-p&1-p-p^2 \end{pmatrix}\\
\begin{pmatrix}1&p\\0&1\end{pmatrix}\begin{pmatrix}1+p& p\\ -p & 1-p\end{pmatrix}&=\begin{pmatrix}1+p-p^2&2p-p^2\\-p&1-p \end{pmatrix}.
\end{align*}
\end{remark}

\begin{lemma}\label{lem-sl2r-cons}
With the notation in Construction~$\ref{Cons1}$, $Y$ is core-free in $X$, and the transitive permutation group $X$ with stabilizer $Y$ is elusive.
\end{lemma}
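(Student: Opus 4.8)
The plan is to transfer core‑freeness and $r$‑elusiveness for $r\neq p$ to the quotient $G:=X/N$, and then to pin down the elements of order $p$ directly inside $\SL_2(R)$. As a preliminary I would record that $G\cong\PSL_2(p)$ with $\overline{Y}\cong\D_{p-1}$: since $-I\notin\widehat N$ we have $X/N\cong\widehat X/(\widehat N Z)\cong\SL_2(p)/\{\pm I\}=\PSL_2(p)$, while $P\leq\widehat N$ and $\varphi(Q)$ is the normalizer of a split torus of $\SL_2(p)$, whose image in $\PSL_2(p)$ is dihedral of order $p-1$. Thus $\overline Y$ is the point stabilizer $H$ of Example~\ref{Exmp1}, so (for the Mersenne prime $p$) $G$ is $p'$-elusive. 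For core‑freeness, simplicity of $\PSL_2(p)$ together with $\overline Y\neq G$ forces the image of $\mathrm{core}_X(Y)$ in $G$ to be trivial, whence $\mathrm{core}_X(Y)\leq N$ and so, by Lemma~\ref{LemN}, $\mathrm{core}_X(Y)\leq Y\cap N=P\cong\C_p^2$, which lies in the bottom layer $N_{k-1}\cong\C_p^3$. As the conjugation action of $X$ on $N_{k-1}$ factors through $G$ and is the irreducible $\Omega_3(p)$‑module $\bbF_p^3$, the subgroup $N_{k-1}$ is minimal normal in $X$; since $|P|=p^2<p^3=|N_{k-1}|$, we conclude $\mathrm{core}_X(Y)=1$.

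The $p'$-elusiveness of $X$ is then immediate from Lemma~\ref{LemExt}: for each prime $r\neq p$, the group $G$ is $r$-elusive and $|N|=p^{3(k-1)}$ is coprime to $r$, so $X$ is $r$-elusive. It therefore remains to prove that $X$ is $p$-elusive, i.e.\ that every element of order $p$ is conjugate into $Y$. Since $|Z|=2$ is coprime to $p$, I would fix the Sylow $p$-subgroup $\widehat W$ of $\widehat X$ equal to the $\varphi$-preimage of the upper unitriangular subgroup of $\SL_2(p)$; as $-I\notin\widehat W$, it meets $Z$ trivially and maps isomorphically onto a Sylow $p$-subgroup $W$ of $X$, carrying $\widehat N$ onto $N$. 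Every $p$-element of $X$ is conjugate into $W$, so it suffices to determine the order-$p$ elements of $\widehat W$.

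This splits into two cases, and this is where the real work---and the main obstacle---lies. If $A\in\widehat W\setminus\widehat N$, its $(1,2)$-entry is a unit of $R$ and $\mathrm{tr}(A)=:\tau\equiv2\pmod p$; writing $A^n=f_n(\tau)A-f_{n-1}(\tau)I$ for the Chebyshev-type polynomials given by $f_0=0$, $f_1=1$, $f_{n+1}=\tau f_n-f_{n-1}$, the equation $A^p=I$ forces $f_p(\tau)=0$ in $R$. The crucial (and most delicate) point is that this cannot happen when $k\geq2$: one checks $f_p(\tau)\equiv(\tau^2-4)^{(p-1)/2}\pmod p$, so $f_p'(2)\equiv0\pmod p$, and expanding $f_p$ to first order about $\tau=2$ then yields $f_p(\tau)\equiv p\pmod{p^2}$; hence $f_p(\tau)$ has $p$-adic valuation exactly $1$ and is nonzero in $R$. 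If instead $A\in\widehat N$, I would write $A=I+p^\ell M$ with $1\leq\ell\leq k-1$ and $M\not\equiv0\pmod p$; the binomial expansion gives $A^p=I+p^{\ell+1}M+E$ with every entry of $E$ of valuation at least $2\ell+1$, so $A^p=I$ is possible only when $\ell=k-1$, that is $A\in N_{k-1}$. Conversely every nontrivial element of $N_{k-1}$ has order $p$ (using $2(k-1)\geq k$). Hence the order-$p$ elements of $\widehat W$ are exactly the nontrivial elements of $N_{k-1}$.

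Finally I would finish exactly as in the proof of Lemma~\ref{LemHyp}. Identifying $N_{k-1}$ with the orthogonal space $\bbF_p^3$ on which $X$ acts, through $G$, as $\Omega_3(p)$, the subgroup $P$ is a non-degenerate $2$-subspace of plus type and so contains vectors of every norm; since $N$ centralizes $N_{k-1}$, Witt's Lemma shows that each nonzero vector of $N_{k-1}$ is mapped into $P$ by some element of $X$. Consequently every element of order $p$ in $X$ is conjugate into $P\leq Y$, so $X$ is $p$-elusive. Together with the $p'$-part this shows $X$ is elusive, completing the proof.
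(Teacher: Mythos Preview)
Your proof is correct, and it reaches the same conclusion by a genuinely different route from the paper's.

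For core-freeness, the paper simply observes that the core of $\widehat Y$ in $\SL_2(R)$ equals $Z$ (citing McQuillan's classification of normal subgroups or direct verification); your argument via simplicity of $\PSL_2(p)$ and minimality of the bottom layer $N_{k-1}$ is more self-contained and uses only what is already set up in the paper.

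The substantive divergence is in the treatment of order-$p$ elements. The paper works by direct matrix computation: assuming $A^\varphi=I+E$, it expands $(I+E+pB)^p$ and obtains $A^p\equiv I+pE\neq I$ (the calculation is really a mod-$p^2$ statement), and once $A^\varphi=I$ it exhibits explicit conjugating matrices in $\SL_2(R)$ carrying $A$ into $P$. Your approach instead (i) uses the Cayley--Hamilton recursion $A^n=f_n(\tau)A-f_{n-1}(\tau)I$ and the polynomial congruence $f_p(\tau)\equiv(\tau^2-4)^{(p-1)/2}\pmod p$ to show $f_p(\tau)$ has $p$-adic valuation exactly $1$ whenever $\tau\equiv2\pmod p$, ruling out order $p$ outside $\widehat N$; and (ii) inside $\widehat N$, uses the binomial expansion to force order-$p$ elements into $N_{k-1}$, after which you finish via the $\Omega_3(p)$-geometry exactly as in Lemma~\ref{LemHyp}. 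Step~(ii) is a point the paper's write-up leaves rather implicit for general $k$; isolating it as you do makes the argument more transparent. Conversely, the paper's explicit conjugations sidestep any question of whether $\Omega_3(p)$ (rather than the full orthogonal group) acts transitively on vectors of a given norm; your Witt-type argument is valid here because the hyperbolic plane $P$ meets every $\PSL_2(p)$-orbit on nonzero trace-zero matrices (including both nilpotent orbits), but this is a shade stronger than the bare statement of Witt's Lemma and is worth a sentence of justification.
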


\begin{proof}
Note that the core of $\widehat{Y}$ in $\SL_2(R)$ is $Z$ (this can be either directly verified or deduced from the classification of normal subgroups of $\SL_2(R)$ in~\cite{McQuillan1965}). We conclude that $Y=\widehat{Y}/Z$ is core-free in $\SL_2(R)/Z=X$. Hence $X$ is a transitive permutation group with stabilizer $Y$, and is $p'$-elusive by Lemma~\ref{LemExt}. To prove that it is elusive, we verify that every element of order $p$ in $X$ is conjugate to some element in $Y$. Since $\gcd(|Z|,p)=\gcd(2,p)=1$, we only need to show that every element of order $p$ in $\widehat{X}$ is conjugate to some element in $\widehat{Y}$.

Take an arbitrary element $A$ of order $p$ in $\widehat{X}=\SL_2(R)$. We first prove
\begin{equation}\label{Eqn3}
A^\varphi\neq
\begin{pmatrix}
 1&1\\0&1
\end{pmatrix}.
\end{equation}
Suppose that this is not the case. Then
\[
A=
\begin{pmatrix}
1&1\\0&1
\end{pmatrix}
+p^{k-1}B=I+E+p^{k-1}B
\]
for some $2\times 2$ matrix $B$ over $R$, where
\[
E:=\begin{pmatrix}
 0&1\\0&0
\end{pmatrix}.
\]
Since $E^2$ is the zero matrix,
\begin{align*}
  A^p&=(I+E+p^{k-1}B)^p\\
  &=(I+E)^p+p^{k-1}\sum_{i=0}^{p-1}(I+E)^iB(I+E)^{p-1-i}\\
  &=I+pE+p^{k-1}\sum_{i=0}^{p-1}(I+iE)B(I+(p-1-i)E)\\
  &=I+pE+p^{k-1}\sum_{i=0}^{p-1}(B+iEB+(p-1-i)BE)\\
  &=I+pE+\left(p^{k-1}\sum_{i=0}^{p-1}1\right)B+\left(p^{k-1}\sum_{i=0}^{p-1}i\right)EB+\left(p^{k-1}\sum_{i=0}^{p-1}(p-1-i)\right)BE\\
  &=I+pE,
\end{align*}
contradicting $A^p=I$. Now, if $A$ is an element of order $p$ in $\SL_2(R)$, then $A^\varphi\neq I+E$ as~\eqref{Eqn3} asserts, from which we derive $A^\varphi\notin\langle I+E\rangle$ by replacing $A$ with powers of $A$. Based on this, since $\langle I+E\rangle$ is a Sylow $p$-subgroup of $\SL_2(p)$, we conclude that $A^\varphi$ cannot be an element of order $p$ in $\SL_2(p)$ (this is obtained by replacing $A$ with its conjugates in $\SL_2(R)$). Therefore, $A^\varphi=I$, and so
\[
A=
\begin{pmatrix}
 1+p^{k-1}a&p^{k-1}b\\p^{k-1}c&1+p^{k-1}d
\end{pmatrix}
\]
for some $a,b,c,d\in R$. Note that $\det(A)=1$ implies $p^{k-1}a+p^{k-1}d=0$.

When $p^{k-1}b\neq 0$, there exists some $e\in R$ such that $be=1$, and so with
\[
D:=
\begin{pmatrix}
1& 0\\-ae &1
\end{pmatrix}
\in\SL_2(R),
\]
we conclude that
\begin{align*}
D^{-1}AD&=
\begin{pmatrix}
1& 0\\ae &1
\end{pmatrix}
\begin{pmatrix}
 1+p^{k-1}a&p^{k-1}b\\p^{k-1}c&1-p^{k-1}a
\end{pmatrix}
\begin{pmatrix}
1& 0\\-ae &1
\end{pmatrix}\\
&=I+p^{k-1}
\begin{pmatrix}
0&b\\a^2e+c&0
\end{pmatrix}
\in P<\widehat{Y}.
\end{align*}
Similarly, when $p^{k-1}c\neq 0$, some element of $\SL_2(R)$ conjugates $A$ into $\widehat{Y}$. Now assume that $p^{k-1}b=p^{k-1}c=0$. Since $A\neq I$, it follows that $p^{k-1}d=-p^{k-1}a\neq 0$. Since
\begin{align*}
\begin{pmatrix}
 1&1\\0&1
\end{pmatrix}^{-1}A
\begin{pmatrix}
 1&1\\0&1
\end{pmatrix}
&=
\begin{pmatrix}
 1&-1\\0&1
\end{pmatrix}
\begin{pmatrix}
 1+p^{k-1}a&0\\0&1-p^{k-1}a
\end{pmatrix}
\begin{pmatrix}
 1&1\\0&1
\end{pmatrix}\\
&=
\begin{pmatrix}
 1+p^{k-1}a& 2p^{k-1}a\\0& 1-p^{k-1}a
\end{pmatrix}
\end{align*}
with $2p^{k-1}a\neq0$, we conclude by the previous cases that $A$ is conjugated into $\widehat{Y}$ by some element of $\SL_2(R)$.
This completes the proof.
\end{proof}

We are now in a position to derive Theorem~\ref{ThmExample3}.

\begin{proof}[\rm\textbf{Proof of Theorem~\ref{ThmExample3}}]
Follow the notation in Construction~\ref{Cons1}.
According to Lemma~\ref{LemN}, $|N|=p^{3k-3}$ and $Y\cap N=PZ/Z$. Since $\widehat{N}=\ker(\varphi)$ is normal in $\SL_2(R)$, we deduce that $N=\widehat{N}Z/Z$ is normal in $\SL_2(R)/Z=X$ where
\begin{align*}
X/N&\cong\SL_2(R)/(\widehat{N}Z)\cong\SL_2(R)^\varphi/(\widehat{N}Z)^\varphi=\SL_2(R)^\varphi/Z^\varphi=\PSL_2(p),\\
YN/N&\cong Y/(Y\cap N)=(\widehat{Y}/Z)/(PZ/Z)\cong\widehat{Y}/(PZ)=(PQ)/(PZ)\cong Q/Z\cong\D_{p-1}.
\end{align*}
The conclusion follows from Lemmas~\ref{LemN} and~\ref{lem-sl2r-cons}.
\end{proof}

\section{Elusive groups of degree not divisible by $4$}\label{SecExmp2}

Consider
\begin{equation}\label{Eqn6}
\A_5=\langle x=(1,2)(3,4),\,y=(1,3,5)\rangle\cong\langle x,y\mid x^2=y^3=(xy)^5=1\rangle.
\end{equation}
Its transitive representation of degree $15$ with stabilizer a Sylow 2-subgroup $\C_2^2$ is $2$-elusive, but neither $3$- nor $5$-elusive.
Let $U=\bbF_3^4$ and $V=\bbF_5^3$ be the fully deleted permutation modules of $\A_5$ over $\bbF_3$ and $\bbF_5$, respectively, which are both (absolutely) irreducible modules of $\A_5$~\cite[p.\ 186]{KL1990}. According to~\cite[Corollary~1]{KP1993I} and~\cite{KP1993II}, both $\mathrm{H}^2(\A_5,U)$ and $\mathrm{H}^2(\A_5,V)$ have dimension $1$. Hence, it may be possible to construct an extension of $\A_5$ that overcomes $3$ and $5$ for $\A_5$. In this section, we construct infinitely many such extensions, leading to a proof of Theorem~\ref{ThmExample2}.

If $\bigoplus_{i=1}^5\bbF_3e_i$ is the permutation module of $\A_5$ over $\bbF_3$, then take
\[
u_i=e_i-e_5\ \text{ for }\,i\in\{1,2,3,4\},
\]
so $U=\bigoplus_{i=1}^4\bbF_3u_i$. Then the action of $\A_5$ on $U$ (in multiplicative notation) is given by
\begin{align}
\label{Eqn7}&u_1^x=u_2,\ \ u_2^x=u_1,\ \ u_3^x=u_4,\ \ u_4^x=u_3,\\
\label{Eqn8}&u_1^y=u_1^{-1}u_3,\ \ u_2^y=u_1^{-1}u_2,\ \ u_3^y=u_1^{-1},\ \ u_4^y=u_1^{-1}u_4.
\end{align}
Similarly, if $\bigoplus_{i=1}^5\bbF_5e_i$ is the permutation module of $\A_5$ over $\bbF_5$, then take
\[
v_i=e_i-e_5+\bbF_5(e_1+e_2+e_3+e_4+e_5)\ \text{ for }\,i\in\{1,2,3\},
\]
so $V=\bigoplus_{i=1}^3\bbF_5v_i$ with
\begin{align}
\label{Eqn9}&v_1^x=v_2,\ \ v_2^x=v_1,\ \ v_3^x=v_1^{-1}v_2^{-1}v_3^{-1},\\
\label{Eqn10}&v_1^y=v_1^{-1}v_3,\ \ v_2^y=v_1^{-1}v_2,\ \ v_3^y=v_1^{-1}.
\end{align}
Inspired by~\eqref{Eqn6}--\eqref{Eqn10}, we construct as follows a group $X$ as an extension of $U\times V$ that can overcome $3$ and $5$ for $\A_5$. Fix an integer $k\geq 1$. Consider the group $X$ with generators $u_1$, $u_2$, $u_3$, $u_4$, $v_1$, $v_2$, $v_3$, $x$, $y$ and defining relations $u_i^{3^k}=[u_i,u_j]=v_i^5=[v_i,v_j]=[u_i,v_j]=1$ for all admissible $i$ and $j$,
\[
x^2=1,\ \ y^3=u_2u_4^{-1},\ \ (xy)^5=v_1^{-2}v_2^2v_3
\]
and those in~\eqref{Eqn7},~\eqref{Eqn8},~\eqref{Eqn9},~\eqref{Eqn10}. Let $U=\langle u_1,u_2,u_3,u_4\rangle$ and $V=\langle v_1,v_2,v_3\rangle$ and $N=\langle U,V\rangle$.

\begin{lemma}\label{GenXk}
For every integer $k\geq 1$, the group $X$ has the form $(\C_{3^k}^4\times \C_5^3).\A_5$.
\end{lemma}

\begin{proof}
Based on the relations of $X$ satisfied by the generators $u_1,u_2,u_3,u_4$ and $v_1,v_2,v_3$, it is clear that $U$ and $V$ are quotients of $\C_{3^k}^4$ and $\C_5^3$, respectively. In particular, $U$ and $V$ must intersect trivially. Moreover, both $U$ and $V$ are normal subgroups of $X$. It follows that $N = \langle U,V\rangle = UV = U\times V$.
Since $x^2,y^3,(xy)^5\in N$, the group $X/N = X/(U\times V)$ is isomorphic to $\A_5$ by~\eqref{Eqn6}. Hence, $X \cong (U\times V).\A_5$. Therefore, in order to prove the lemma, it suffices to show that $|U| = 3^{4k}$ and $|V| = 5^3$.

Let $X_U$ and $X_V$ denote the quotients of $X$ by $U$ and $V$, respectively, each admitting a presentation obtained from the presentation of $X$ by setting the generators $u_1,u_2,u_3,u_4$ and $v_1,v_2,v_3$ to $1$, respectively.
One can directly check (for example, by \textsc{Magma}~\cite{BCP1997}) that $X_U$ is isomorphic to ${\C_5^3}.\A_5$. As a consequence, $|V|=5^3$. To complete the proof, we show that $X_V$ is isomorphic to ${\C_{3^k}^4}.\A_5$. We do this by specifying an embedding $\phi$ of $X_V=\langle u_1,u_2,u_3,u_4,x,y\rangle$ into the wreath product $W\coloneqq\C_{3^{k+1}}\wr \A_5 = \C_{3^{k+1}}^5\rtimes \A_5$.

Let $b_1,\ldots,b_5$ be the natural generators of the base group $\C_{3^{k+1}}^5$ of $W$. For $i \in \{1,2,3,4\}$, let
\[
h_i= b_i^3 b_5^{-3}.
\]
Let $r = (1,2)(3,4)$ and $s = (1,3,5)$ be elements of $A_5$, and let
\[
\rho = b_3^{-1}b_4r\ \ \text{and}\ \ \sigma = b_2b_4^{-1}s
\]
be elements of $W$. We first verify that $\phi\colon X_V\rightarrow W$ given by $\phi(u_i)= h_i$ for $i\in\{1,2,3,4\}$, $\phi(x) = \rho$ and $\phi(y) = \sigma$ is a group homomorphism.

Obviously, $h_i^{3^k} = 1$ and $[h_i,h_j]=1$ for all $i,j\in\{1,2,3,4\}$. Observe that
\begin{align*}
\rho^2&=b_3^{-1}b_4(b_3^{-1}b_4)^{r}r^2 =  b_3^{-1}b_4b_4^{-1}b_3= 1,\\
\sigma^3&=(b_2b_4^{-1})^3 s^3 = b_2^3b_4^{-3} = b_2^3b_5^{-3} (b_4^3b_5^{-3})^{-1} = h_2h_4^{-1},
\end{align*}

so the relations $\rho^2 = 1$ and $\sigma^3 = h_2h_4^{-1}$ hold. To verify the relation $(\rho\sigma)^5 = 1$, we compute
\begin{align*}
\rho\sigma= b_3^{-1}b_4rb_2b_4^{-1}s &= b_3^{-1}b_4(b_2b_4^{-1})^r rs  \\
&= b_3^{-1}b_4 b_1b_3^{-1}(1,2)(3,4)(1,3,5)= b_1b_3^{-2}b_4(1,2,3,4,5)
\end{align*}
and write $\rho\sigma=bt$ with $b= b_1b_3^{-2}b_4$ and $t = (1,2,3,4,5)$. Then
\[
(\rho\sigma)^5 = (bt)^5 = bb^{t^{-1}}b^{t^{-2}}b^{t^{-3}}b^{t^{-4}}t^5 = bb^{t}b^{t^2}b^{t^3}b^{t^4},
\]
and since
\[
b^t = b_2b_4^{-2}b_5, \ \ b^{t^2} = b_3b_5^{-2}b_1, \ \ b^{t^3} = b_4b_1^{-2}b_2, \ \ b^{t^4} = b_5b_2^{-2}b_3,
\]
it follows that $(bt)^5 = 1$. In particular, $(\rho\sigma)^5 = 1$. Now we establish the compatibility of the generators $h_1,h_2,h_3,h_4,\rho,\sigma$ with relations~\eqref{Eqn7} and~\eqref{Eqn8}. Let $i\in \{1,2,3,4\}$. Then
\begin{align*}
        h_i^\rho = \rho h_i\rho= b_3^{-1}b_4rb_i^3b_5^{-3}b_3^{-1}b_4r
        &=b_3^{-1}b_4(b_i^3b_5^{-3}b_3^{-1}b_4)^{r}  \\
        &= b_3^{-1}b_4b_{i^r}^3b_5^{-3}b_4^{-1}b_3 = b_{i^r}^3b_5^{-3}= h_{i^r},
\end{align*}
as in~\eqref{Eqn7}. For the relation corresponding to~\eqref{Eqn8},
\begin{align*}
        h_i^\sigma = b_2^{-1}b_4s^{-1}b_i^3b_5^{-3}b_2b_4^{-1}s
        &= b_2^{-1}b_4(b_i^3b_5^{-3}b_2b_4^{-1})^s\\
        &= b_2^{-1}b_4 b_{i^s}^3 b_1^{-3}b_2b_4^{-1}=b_1^{-3}b_{i^s}^{3}= h_1^{-1}b_{i^s}^3b_5^{-3},
\end{align*}
from which it follows immediately that
\begin{align*}
h_1^\sigma = h_1^{-1}b_{3}^3b_5^{-3} = h_1^{-1}h_3&,\quad
h_2^\sigma = h_1^{-1}b_{2}^3b_5^{-3} = h_1^{-1}h_2,\\
h_3^\sigma = h_1^{-1}b_{5}^3b_5^{-3} = h_1^{-1}&,\quad
h_4^\sigma = h_1^{-1}b_{4}^3b_5^{-3} = h_1^{-1}h_4,
\end{align*}
as required.

Thus far we have verified that $\phi$ is a group homomorphism from $X_V$ to $W$. Let $H = \langle h_1,h_2,h_3,h_4\rangle$ and $G = \langle H, \rho, \sigma\rangle$ be the image of $\phi$. It is clear that $H=\C_{3^k}^4$. Note that $H$ is normal in $G$ (in fact, $H$ is normal in $W$) with the quotient $G/H$ isomorphic to $A_5$ (generated by $r = (1,2)(3,4)$ and $s = (1,3,5)$). Hence, $G = \C_{3^k}^4.\A_5$, and so $|X_U|\geq |G| = 3^{4k}\times 60$. As previously observed, $U$ is a quotient of $\C_{3^k}^4$ and in particular, $3^{4k}\geq |U|$. Finally, $U$ is normal in $X_V$ and $X_V/U\cong A_5$. In particular, $|X_V| = |U||A_5| \leq 3^{4k}\times 60$. It follows that $|X_U| = 3^{4k}\times 60 = |G|$ and consequently, $\phi$ is a group isomorphism.
\end{proof}

\begin{construction}\label{Cons3}
Fix an integer $k\geq 1$. Let $X$, $U$, $V$ and $N$ be as defined above, let $\,\overline{\phantom{\varphi}}\,$ be the quotient map from $X$ to $G\coloneqq X/N$, and let
\[
M=\langle u_1u_2,u_2u_3^{-1},u_3u_4,v_1v_2,v_2v_3^{-1}\rangle.
\]
Note that $M\cong \C_{3^k}^3\times\C_5^2$. Since it is straightforward to verify that $\langle\overline{x},\overline{y}[\overline{x},\overline{y}]^2\rangle=\C_2^2$ is a subgroup of $G$ stabilizing $M$, there exists a subgroup $Y=M{:}H=(\C_{3^k}^3\times\C_5^2){:}\C_2^2$ of $X$ with
\[
H\cong\overline{H}
=\langle\overline{x},\overline{y}[\overline{x},\overline{y}]^2\rangle.
\]
Let $W= \langle M,x\rangle = M{:}\C_2$ be a subgroup of $Y$.
\end{construction}

\begin{remark}
Although not needed in the sequel, it can be directly verified that one may take $H=\langle x,u_1^{-1}u_2^{-1}u_3u_4v_1^{-1}v_2^{-2}v_3^2y[x,y]^2\rangle$ in Construction~\ref{Cons3}.
\end{remark}

By Lemma~\ref{GenXk}, for a fixed integer $k\geq 1$, the group $X$ from Construction~\ref{Cons3} satisfies $X=N.G=(\C_{3^k}^4\times\C_5^3).\A_5$. We will see in the next lemma that $X$ is a non-split extension of $N=\C_{3^k}^4\times\C_5^3$ by $G=\A_5$.

\begin{lemma}\label{LemCon3}
For every integer $k\geq 1$, both $Y$ and $W$ are core-free in $X=(\C_{3^k}^4\times\C_5^3){}^{\boldsymbol{\cdot}}\A_5$, and $X\setminus N$ has no elements of order $3$ or $5$.
\end{lemma}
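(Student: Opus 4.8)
The plan is to prove the two assertions separately: first that $Y$ is core-free in $X$, and then that $X\setminus N$ has no elements of order $3$ or $5$. The core-freeness should follow essentially as in Lemma~\ref{LemHyp}: since $U$ and $V$ are minimal normal in $X$, the only proper nontrivial normal subgroups of $X$ contained in $N$ are $U$, $V$, and $N=U\times V$. I would check directly that $Y$ contains none of these (for instance, $Y\cap N=M=\C_3^3\times\C_5^2$ does not contain $U$ or $V$), and that any normal subgroup of $X$ contained in $Y$ must lie in $N$ because $G=\A_5$ is simple while $\overline{Y}=\C_2^2$ is a proper subgroup. Hence the core of $Y$ is trivial.

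The substance lies in the second assertion, and I would treat the primes $3$ and $5$ in parallel, working one prime at a time inside a Sylow subgroup — exactly the strategy used for Lemma~\ref{lem-nop} via the group $W$. For the prime $3$: the relevant ``overcoming'' phenomenon is forced by the relation $y^3=u_2u_4^{-1}$, which makes the preimage of a $3$-element of $G$ fail to have order $3$ unless it already lies in $N$. Concretely, I would take a Sylow $3$-subgroup $P_3$ of $X$, note that it is an extension of $U=\C_3^4$ by $\langle\overline{y}\rangle=\C_3$, and apply the Hall–Petrescu identity to an arbitrary element $d\,s^i$ with $d\in U$ and $s$ a preimage of $\overline{y}$ of order a power of $3$. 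As in Lemma~\ref{lem-nop}, the $p$-th power collapses modulo the lower central series to a condition forcing $i\equiv 0\pmod 3$; the key input making this nontrivial is that $s^3=u_2u_4^{-1}\neq 1$, so an element outside $U$ cannot have order $3$. The prime $5$ is handled identically using the relation $(xy)^5=v_1^{-2}v_2^2v_3$: here a preimage of the $5$-element $\overline{xy}$ of $G$ has fifth power equal to $v_1^{-2}v_2^2v_3\neq 1$, obstructing order-$5$ elements in the coset.

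Because $U$ and $V$ are coprime and both normal, I would first reduce to these two cases by observing that an element of order $3$ in $X$ projects to an element of order $1$ or $3$ in $G$ (and similarly for $5$), and that its component in the coprime factor contributes nothing to the relevant Sylow analysis; thus it suffices to analyze $X/V\cong U{}^{\boldsymbol{\cdot}}(\text{something})$ for the prime $3$ and $X/U$ for the prime $5$. This is what lets me isolate a group of the shape treated in Lemma~\ref{lem-nop}. The main obstacle I anticipate is bookkeeping in the Hall–Petrescu computation: I must confirm that the nilpotency class of the relevant Sylow subgroup is small enough that all but finitely many commutator terms $c_i$ vanish, and that the surviving terms $c_2^{\binom{p}{2}}, c_3^{\binom{p}{3}}$ lie in the abelian normal part $U$ (resp.\ $V$), so that the equation $(d\,s^i)^p=1$ genuinely forces $s^i\in U$ (resp.\ $s^i\in V$). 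Once the class bound and the placement of these terms are verified, the contradiction with $s^3=u_2u_4^{-1}\neq1$ (resp.\ the fifth-power relation) closes the argument, and the non-split conclusion $X=(\C_3^4\times\C_5^3){}^{\boldsymbol{\cdot}}\A_5$ follows from Lemma~\ref{LemNonSplit} applied to each of the primes $3$ and $5$.
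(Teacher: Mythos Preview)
Your proposal is correct and follows essentially the same route as the paper: core-freeness from the minimal normality of $U$ and $V$ (neither is contained in $M=Y\cap N$), and the absence of order-$3$ and order-$5$ elements in $X\setminus N$ by running the Hall--Petrescu argument of Lemma~\ref{lem-nop} inside the Sylow $3$-subgroup $U\langle y\rangle$ and the Sylow $5$-subgroup $V\langle xy\rangle$ respectively. The paper does not bother with the quotients $X/V$ and $X/U$; it simply observes that these Sylow subgroups already have the shape handled by Lemma~\ref{lem-nop}, so your reduction step is harmless but unnecessary.

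One small correction: your final appeal to Lemma~\ref{LemNonSplit} to deduce that the extension is non-split is misplaced. That lemma requires $X$ to be $p$-elusive, which is not yet known at this stage (it is only established later, in the proof of Theorem~\ref{ThmExample2}, by showing that every element of $U\cup V$ is conjugate into $M$). The non-splitness follows here more directly: a complement to $N$ in $X$ would be isomorphic to $\A_5$ and would therefore contain an element of order $3$ lying in $X\setminus N$, contradicting what you have just shown.
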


\begin{proof}
As in the proof of Lemma~\ref{lem-nop}, we see that $X\backslash N$ contains no elements of order $5$ (this can be also verified by \textsc{Magma}~\cite{BCP1997} via computation in $X_U$). As a consequence, $X=(\C_{3^k}^4\times\C_5^3){}^{\boldsymbol{\cdot}}\A_5$ is a non-split extension of $N$ by $\A_5$.

Let $J\cong \C_3^4$ be the subgroup of $N$ generated by the elements of order $3$ in $N$. To prove that $Y$ and $W$ are core-free in $X$, it suffices to show that the minimal normal subgroups of $X$ are precisely $J$ and $V$.

From the defining relations of $X$ in Construction~\ref{Cons3}, it is easy to see that the action of $X$ on $J$ and $V$ corresponds to the action of $\A_5$ on its fully deleted permutation modules over $\bbF_3$ and $\bbF_5$, respectively, which are irreducible. Therefore, both $J$ and $V$ are minimal normal subgroups of $X$. We next show that they are the only ones.

Let $K$ be a minimal normal subgroup of $X$. Since the quotient of $X$ by $N$ is a simple group, either $X = NK$ or $K\leq N$. If $X = NK$, then the minimality of $K$ implies $N \cap K = 1$ and so $X=N{:}K$, contradicting the conclusion that $X$ is a non-split extension of $N$ by $\A_5$. Thus, $K\leq N$. Note that a nontrivial element of $N$ is in $J$ if and only if it has order $3$, and is in $V$ if and only if it has order $5$. Therefore, $K$ must intersect $J$ or $V$ nontrivially. Hence $K=J$ or $V$, as desired.

It remains to prove that $X\backslash N$ contains no elements of order $3$. Consider $P \coloneqq \langle U, y\rangle$. Clearly, $U$ is normalized by $y$, and since $y^3\in U$ and $y\notin U$, we conclude that $|P| = 3|U| = 3^{4k+1}$. Hence, $P$ is a Sylow $3$-subgroup of $X$. To complete the proof, we need only show that $P\setminus U$ contains no elements of order $3$.
Observe from~\eqref{Eqn8} that $\langle u_1,u_3\rangle$ is normalized by $y$ and hence is normal in $P$, and that $P/\langle u_1,u_3\rangle$ is abelian. Thus, $P'\leq \langle u_1,u_3\rangle$.

Suppose for a contradiction that $uy^i\in P$ has order $3$ for some $u\in U$ and $i\in\{1,2\}$. Again by the Hall-Petrescu identity~\cite[Theorem~III.9.4]{Huppert2025}, we obtain
\[
u^3 (y^i)^3 = (uy^i)^3 c_2^3 c_3=c_2^3 c_3
\]
for some element $c_j$ in the $j$th term $P_j$ of the lower central series of $P$, where $j\in\{2,3\}$. Since $P_3 \leq P_2 = P'$, it follows that
\begin{equation}\label{EltOrder3}
u^3 (y^i)^3 \in P'\leq\langle u_1,u_3\rangle.
\end{equation}
Expressing $u = u_1^a u_2^b u_3^c u_4^d$ with $a,b,c,d\in \mathbb{Z}$ in~\eqref{EltOrder3} and recalling that $y^3 = u_2u_4^{-1}$, we  derive
\[
u_1^{3a}u_2^{3b+i}u_3^{3c}u_4^{3d-i}\in \langle u_1,u_3\rangle.
\]
This implies that $u_2^{3b+i}u_4^{3d-i}\in \langle u_1,u_3\rangle$, which is not possible as $i\in\{1,2\}$. The proof is thus complete.
\end{proof}

\begin{proof}[\rm\textbf{Proof of Theorem~\ref{ThmExample2}}]
Fix an integer $k\geq 1$. We follow the notation in Construction~\ref{Cons3} and identify $G$ with $\A_5$ via $\overline{x}\mapsto(1,2)(3,4)$ and $\overline{y}\mapsto(1,3,5)$. By Lemma~\ref{LemCon3}, we have a transitive permutation group $X$ with stabilizer $Y$ and degree $|X|/|Y|=3^{k+1}5^2$. To prove that $X$ is elusive, it suffices to show that each element of prime order in $N$ is conjugate in $X$ to some element of $M$.
Let $J=\langle w_1,w_2,w_3,w_4\rangle$, where $w_i\coloneqq u_i^{3^{k-1}}$ for $i\in\{1,2,3,4\}$. By Lemma~\ref{LemCon3}, every element of $X$ of order $3$ is in $J$. Similarly, every element of $X$ of order $5$ is in $V$. Hence, it suffices to show that each element of $J$ is mapped into $M\cap J= \langle w_1w_2,w_2w_3^{-1},w_3w_4\rangle=\C_3^3$  by some element of $G$ while each element of $V$ is mapped into $M\cap V=\langle v_1v_2,v_2v_3^{-1}\rangle=\C_5^2$ by some element of $G$. The latter holds for the same reason as \eqref{ItemHyp2}$\Rightarrow$\eqref{ItemHyp3} in the proof of Lemma~\ref{LemHyp} (it can be also verified by \textsc{Magma}~\cite{BCP1997} via computation in $X_U$). For the former, observe
\begin{equation}\label{Eqn11}
M\cap J=\langle w_1w_2,w_2w_3^{-1},w_3w_4\rangle=\langle w_1w_2,w_2w_4\rangle\times\langle w_1w_2^{-1}w_3^{-1}\rangle,
\end{equation}
and the action of $G=\A_5$ on $U$ satisfies
\begin{align}
(M\cap J)^{(1,4,5)}&=\langle w_1w_2u_4,w_2w_3^{-1},w_1u_3\rangle=\langle w_1w_2,w_4\rangle\times\langle w_1w_2^{-1}w_3^{-1}\rangle,\\
(M\cap J)^{(2,4,5)}&=\langle w_1w_2u_4,w_3^{-1}u_4,w_2w_3\rangle=\langle w_1,w_2w_4\rangle\times\langle w_1w_2^{-1}w_3^{-1}\rangle,\\
(M\cap J)^{(3,4,5)}&=\langle w_1w_2w_3,w_2w_4^{-1},w_3u_4\rangle=\langle w_1,w_2w_4^{-1}\rangle\times\langle w_1w_2^{-1}w_3^{-1}\rangle,\\
(M\cap J)^{(4,2,5)}&=\langle w_1w_4,w_3^{-1},w_2w_3w_4\rangle=\langle w_1w_4,w_2w_4\rangle\times\langle w_1w_2^{-1}w_3^{-1}\rangle,\\
\label{Eqn12}(M\cap J)^{(4,3,5)}&=\langle w_1w_2w_4,w_2,w_3w_4\rangle=\langle w_1w_4,w_2\rangle\times\langle w_1w_2^{-1}w_3^{-1}\rangle.
\end{align}
Denote the groups in~\eqref{Eqn11}--\eqref{Eqn12} by $W_1,\dots,W_6$ in order. The intersection of any two among $W_1,\dots,W_6$ has order $9$, and the intersection of any four among $W_1,\dots,W_6$ has order $3$. Moreover, for a $3$-subset $\{i,j,k\}$ of $\{1,\ldots,6\}$, we can directly verify that
\[
|W_i\cap W_j\cap W_k|=
\begin{cases}
9&\text{if }\{i,j,k\}=\{1,3,5\},\,\{1,4,6\},\,\{2,3,6\}\text{ or }\{2,4,5\}\\
3&\text{else}.
\end{cases}
\]
It follows from the Inclusion–Exclusion Principle that
\[
|W_1\cup\dots\cup W_6|=27\binom{6}{1}-9\binom{6}{2}+9\cdot4+3\left(\binom{6}{3}-4\right)-3\binom{6}{4}+3\binom{6}{5}-3\binom{6}{6}=81.
\]
Therefore, the images of $M\cap J$ under elements in $G$ cover $J$, or equivalently, each element of $J$ is mapped into $M\cap J$ by some element of $G$. This proves that the transitive permutation group $X=(\C_{3^k}^4\times\C_5^3){}^{\boldsymbol{\cdot}}\A_5$ with stabilizer $Y=(\C_{3^k}^3\times\C_5^2){:}\C_2^2$ is elusive.

Note that $\overline{W}$ contains an involution $\overline{x}$. Since there is only one conjugacy class of involutions in $G=\A_5$, the transitive permutation group $G$ with stabilizer $\overline{W}$ is $2$-elusive. Thereby we derive from Lemma~\ref{LemExt} that the transitive permutation group $X$ with stabilizer $W$ is $2$-elusive. Moreover, our proof shows that $X$ is both $3$-elusive and $5$-elusive. Hence the transitive permutation group $X=(\C_{3^k}^4\times\C_5^3){}^{\boldsymbol{\cdot}}\A_5$ with stabilizer $W=(\C_{3^k}^3\times\C_5^2){:}\C_2$ is elusive, which completes the proof.
\end{proof}

\begin{remark}
As previously explained, the presentation of the group $X$ was derived from the action of $\A_5$ on its fully deleted permutation modules over $\mathbb{F}_3$ and $\mathbb{F}_5$. In the proof of Theorem~\ref{ThmExample2}, we were able to use relations coming from more general modules for $\A_5$ over $\C_{3^k}$ with $k\geq 1$ an integer. However, the following shows that the same approach does not apply in the case of characteristic $5$.

Fix an integer $\ell \geq 1$. Let $X_\ell$ be a group with generators $u_1,u_2,u_3,u_4,v_1,v_2,v_3,x,y$ satisfying the relations in the presentation of the group $X$ from Construction \ref{Cons3} with the relation $v_i^5 = 1$ replaced by $v_i^{5^\ell}=1$.
Note that $V \coloneqq \langle v_1,v_2,v_3\rangle$ is an abelian normal subgroup of $X_\ell$ and a quotient of $\C_{5^\ell}^3$. Since $(xy)^5$ and $v_1$ are in $V$,
\[
v_1^{(xy)^5} = v_1.
\]
The left-hand side can be calculated directly as follows:
\begin{align*}
v_1^{xy} &= v_2^y = v_1^{-1}v_2, \\
v_1^{(xy)^2} &= (v_1^{-1}v_2)^{xy} = (v_2^{-1}v_1)^y = (v_1^{-1}v_2)^{-1}(v_1^{-1}v_3) = v_2^{-1}v_3, \\
v_1^{(xy)^3} &= (v_2^{-1}v_3)^{xy} = (v_1^{-2}v_2^{-1}v_3^{-1})^y = (v_1^{-1}v_3)^{-2} (v_1^{-1}v_2)^{-1}v_1 = v_1^4 v_2^{-1}v_3^{-2}, \\
v_1^{(xy)^4 }&= (v_1^4 v_2^{-1}v_3^{-2})^{xy} = (v_1v_2^6v_3^2)^y = (v_1^{-1}v_3)(v_1^{-1}v_2)^6(v_1^{-1})^{2} = v_1^{-9}v_2^6v_3, \\
v_1^{(xy)^5} &= (v_1^{-9}v_2^6v_3)^{xy} = (v_1^5v_2^{-10}v_3^{-1})^y = v_1^6 v_2^{-10}v_3^5.
\end{align*}
Hence we derive from $v_1^{(xy)^5} = v_1$ that
\begin{equation}\label{Rel1}
v_1^5v_2^{-10}v_3^5 = 1.
\end{equation}
Note that $(yx)^5 = y(xy)^5y^{-1} = ((xy)^5)^{y^{-1}}$. Since $(xy)^5\in V$ and $V$ is normal in $X_\ell$, we conclude that $(yx)^5\in V$, and so
\[
v_2^{(yx)^5} = v_2.
\]
The left-hand side can be calculated as follows by noting $(xy)^5 = x(yx)^5x$:
\[
v_2^{(yx)^5} = v_2^{x(yx)^5x} = \left(v_1^{(xy)^5}\right)^x = (v_1^6v_2^{-10}v_3^5)^x  = v_1^{-15}v_2v_3^{-5}.
\]
Thus, we derive from $v_2^{(yx)^5} = v_2$ that
\begin{equation}\label{Rel2}
    v_1^{15}v_3^{5} = 1.
\end{equation}
Moreover, $v_3$ and $(yx)^5$ also commute, implying that
\[
v_3^{(yx)^5} = v_3.
\]
Since $y^3\in U\coloneqq \langle u_1,u_2,u_3,u_4\rangle$, we deduce that $v^{y^3}=v$ and hence $v^{y^{-1}} = v^{y^2}$ for all $v\in V$. Therefore,
\[
    v_3^{(yx)^5} = v_3^{y(xy)^5y^{-1}} = \left(\left(v_1^{(xy)^5}\right)^{-1}\right)^{y^2} = (v_1^{-6}v_2^{10}v_3^{-5})^{y^2} = (v_1v_2^{10}v_3^{-6})^y = v_1^{-5}v_2^{10}v_3.
\]
We conclude that
\begin{equation}\label{Rel3}
    v_1^{-5}v_2^{10} = 1.
\end{equation}
By multiplying~\eqref{Rel1} and~\eqref{Rel3}, we obtain $v_3^5 =1$. This combined with~\eqref{Rel2} implies that $v_1^{15} = 1$, which is equivalent to $v_1^5 = 1$ (since $V$ is a quotient of a $5$-group). Plugging this back into~\eqref{Rel3} gives $v_2^{10} = 1$, further implying that $v_2^5 = 1$. Hence, $X_\ell$ is isomorphic to $X$ for every integer $\ell\geq 1$.
\end{remark}

\section{Concluding remarks}\label{SecRmk}

The following is a slight generalization of Wielandt~\cite[Lemma~8.4]{Wielandt1969} and can be proved by Wielandt's Dissection Theorem~\cite[Theorem~6.5]{Wielandt1969}.

\begin{theorem}[{\cite[Theorem~5.3]{CGJKKMN2002}}]\label{ThmE}
Let $E$ be a permutation group on a finite set $\Omega$ such that the following conditions hold:
\begin{enumerate}[{\rm(a)}]
\item\label{ItemE1} for each $v\in\Omega$, the stabilizer $E_v$ is normal in $E$;
\item\label{ItemE2} for each $u$ and $v$ in $\Omega$, the orders of $E_u$ and $E_v$ coincide, and either $E_u=E_v$ or $E=E_uE_v$.
\end{enumerate}
Then the $2$-closure of $E$ contains a derangement of prime order.
\end{theorem}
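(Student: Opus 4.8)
The plan is to read off the orbit structure forced by \eqref{ItemE1} and \eqref{ItemE2}, dissect $\Omega$ along that structure, and then build the required derangement inside the resulting direct-product description of the $2$-closure $E^{(2)}$.

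First I would record the consequences of normality. Since each $E_v\trianglelefteq E$, for every $g\in E$ we have $E_{v^g}=(E_v)^g=E_v$, so the point stabilizer is constant along each $E$-orbit; moreover $E_v$ then fixes its whole orbit pointwise, and $E$ acts on that orbit as the regular group $E/E_v$. Writing the orbits as $\Omega_1,\dots,\Omega_m$ with (constant) stabilizers $K_1,\dots,K_m$, condition \eqref{ItemE2} forces all $|K_i|$ to be equal, say to $|K|$, so every orbit has the common size $n:=|E|/|K|$, and for orbits $\Omega_a,\Omega_b$ either $K_a=K_b$ or $E=K_aK_b$. I would then group the orbits into classes $\Delta_1,\dots,\Delta_t$, declaring two orbits equivalent when their stabilizers coincide. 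On $\Delta_s$ the kernel of the action is the common stabilizer $K^{(s)}$, so $E$ acts there as $R_s:=E/K^{(s)}$, regularly on each constituent orbit, and $|R_s|=n$ for every $s$.

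Next I would check the hypothesis of the Dissection Theorem between distinct classes. For orbits $\Omega_a\subseteq\Delta_s$ and $\Omega_b\subseteq\Delta_{s'}$ with $s\neq s'$ we have $K_a\neq K_b$, hence $E=K_aK_b$ by \eqref{ItemE2}; the $E$-orbits on $\Omega_a\times\Omega_b\cong(E/K_a)\times(E/K_b)$ correspond to the $(K_a,K_b)$-double cosets in $E$, of which $E=K_aK_b$ leaves exactly one, so $E$ is transitive on $\Omega_a\times\Omega_b$. Thus the linkage between any two orbits lying in different classes is ``complete bipartite'', which is precisely the triviality condition permitting a dissection of $\Omega$ along the partition $\Omega=\Delta_1\sqcup\dots\sqcup\Delta_t$. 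Applying Wielandt's Dissection Theorem~\cite[Theorem~6.5]{Wielandt1969} then yields the factorization $E^{(2)}=(E^{\Delta_1})^{(2)}\times\dots\times(E^{\Delta_t})^{(2)}$, where each factor acts on $\Delta_s$ and trivially elsewhere. To finish, note that each factor contains $E^{\Delta_s}=R_s$, which acts regularly on every orbit of $\Delta_s$. Assuming $E\neq1$ (otherwise $E^{(2)}=\Sym(\Omega)$ and the claim is immediate for $|\Omega|\geq2$), we have $n>1$; fix a prime $p$ dividing $n$. By Cauchy's theorem each $R_s$ contains an element $r_s$ of order $p$, which, being nontrivial in a regular action, is fixed-point-free on all of $\Delta_s$. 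The element $\sigma$ acting as $r_s$ on each $\Delta_s$ lies in $\prod_s(E^{\Delta_s})^{(2)}=E^{(2)}$, has order $p$, and is fixed-point-free on every $\Delta_s$, hence is a derangement of prime order in $E^{(2)}$.

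I expect the main obstacle to be the precise invocation of the Dissection Theorem: pinning down its exact hypotheses (triviality of the cross-orbitals between the dissection classes) and conclusion (the direct-product factorization of the $2$-closure), and matching them to the complete-bipartite linkage extracted from $E=K_aK_b$. I would also flag that the equal-order clause of \eqref{ItemE2} is essential rather than cosmetic: it is exactly what guarantees a single prime $p$ dividing $|R_s|=n$ for all $s$ simultaneously, so that the independently chosen $r_s$ can be assembled into one element of prime order; without a common prime no single prime-order derangement need exist.
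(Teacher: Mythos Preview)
Your argument is correct and follows precisely the route the paper indicates: the paper does not include its own proof of this theorem but merely cites it from~\cite{CGJKKMN2002} and remarks that it ``can be proved by Wielandt's Dissection Theorem~\cite[Theorem~6.5]{Wielandt1969}''. Your write-up carries out exactly that strategy---extracting from \eqref{ItemE1} the regular action on each orbit, using $E=K_aK_b$ from \eqref{ItemE2} to get the trivial cross-orbitals needed for dissection, factoring $E^{(2)}$ accordingly, and assembling a prime-order derangement from the regular factors using the common orbit length $n$---so there is nothing substantive to compare.
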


\begin{remark}
An equivalent condition to~\eqref{ItemE1} is that the induced permutation group of $E$ on each orbit is regular.
If condition~\eqref{ItemE1} holds, then $E_u=E_v$ for $u$ and $v$ in the same orbit of $E$.
Also, under condition~\eqref{ItemE1}, $|E_u|=|E_v|$ for each $u$ and $v$ in $\Omega$ if and only if $E$ is half-transitive (all orbits have the same length). In the conclusion of the theorem, the existence of a derangement of prime order is equivalent to the existence of a derangement that is a product of disjoint cycles of the same length.
\end{remark}

Let $X$ be a transitive permutation group with stabilizer $Y$. If $X$ has a normal elementary abelian subgroup $E=\C_p^\ell$ and $E\cap Y=\C_p^{\ell-1}$, then by Theorem~\ref{ThmE}, there is a derangement of prime order in the $2$-closure of $E$ and hence in the $2$-closure of $X$.
This shows that none of the elusive groups constructed in this paper is a counterexample to the Polycirculant Conjecture.

The elusive groups $N.G$ in Section~\ref{SecIdea} arise from $p'$-elusive groups $G$ that are nonabelian simple and act irreducibly on the elementary abelian $p$-group $N$ (regarded as an $\bbF_pG$-module). It is natural to look for elusive groups from other nonabelian simple $p'$-elusive groups $G$, or more generally from quasiprimitive (every nontrivial normal subgroup being transitive) $p'$-elusive permutation groups $G$.

\begin{problem}\label{ProbClassification}
Is there a counterexample to the Polycirculant Conjecture among elusive groups $N.G$ with stabilizer $Y$ such that $G$ is a quasiprimitive $p'$-elusive permutation group with stabilizer $YN/N$ for some prime $p$ and $N$ is an irreducible $\bbF_pG$-module?
\end{problem}

To approach Problem~\ref{ProbClassification}, we begin with the following observation.

\begin{lemma}\label{LemSplit}
Let $X=N{:}G$ be an elusive group with stabilizer $Y$ such that $G$ is a transitive permutation group with stabilizer $YN/N$. Then $G$ is elusive.
\end{lemma}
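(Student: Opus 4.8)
The plan is to exploit the splitting. Since $X=N{:}G$, the factor $G$ is realised as a complement to $N$ in $X$, so that $G$ may be regarded as a subgroup of $X$ with $G\cap N=1$, and the restriction to $G$ of the quotient map $\overline{\phantom{w}}\colon X\to X/N$ is an isomorphism $G\to X/N$ carrying the stabilizer $YN/N$ of the $G$-action onto $\overline{Y}$. Elusiveness of $G$ means that every element of prime order in $G$ fixes a point of its transitive action, equivalently lies in some $G$-conjugate of the stabilizer $YN/N$; this is exactly what I would establish.

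First I would fix a prime $r$ and an element $g\in G$ of order $r$. Viewing $G$ as a complement inside $X$, the element $g$ has order $r$ in $X$, and since $X$ is elusive it has no derangement of order $r$; hence $g$ is conjugate in $X$ to an element of $Y$, say $g^x\in Y$ for some $x\in X$. Applying $\overline{\phantom{w}}$ then gives $\overline{g}^{\,\overline{x}}\in\overline{Y}=YN/N$ with $\overline{x}\in X/N$. Because $G\cap N=1$, the element $\overline{g}$ still has order $r$, and under the identification $G\cong X/N$ it equals $g$ while conjugation by $\overline{x}$ becomes conjugation by an element of $G$. Thus $g$ lies in a $G$-conjugate of $YN/N$ and so fixes a point of the $G$-action. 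As $r$ and $g$ were arbitrary, $G$ has no derangement of prime order, i.e.\ $G$ is elusive.

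I do not anticipate a genuine obstacle, since the statement is essentially the prime-by-prime contrapositive of Lemma~\ref{LemNonSplit} taken with $H=\overline{Y}=YN/N$: an elusive $X$ is $p$-elusive for every prime $p$, and a split $N{:}G$ that is $p$-elusive forces $G$ to be $p$-elusive, so one could also simply invoke that lemma once for each prime. The only point requiring care is the bookkeeping around the identification $G\cong X/N$ supplied by the splitting—verifying that $\overline{\phantom{w}}$ preserves the prime order (which is where $G\cap N=1$ is used) and that the $X$-conjugacy of $g$ into $Y$ descends to a $G$-conjugacy into $YN/N$ in the quotient.
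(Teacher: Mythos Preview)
Your proposal is correct and follows essentially the same approach as the paper: both view $G$ as a complement inside $X$, take a prime-order element $g\in G\leq X$, use elusiveness of $X$ to conjugate $g$ into $Y$, and then pass to the quotient $X/N$ to land $\overline{g}$ in a conjugate of $\overline{Y}=YN/N$. Your additional remark that the result also follows by applying the contrapositive of Lemma~\ref{LemNonSplit} prime by prime is a valid alternative phrasing of the same idea.
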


\begin{proof}
Denote by $\,\overline{\phantom{\varphi}}\,$ the quotient map from $X$ to $G=X/N$. Let $g\in G\leq X$ with $|g|=p$ for some prime $p$. Since $X$ is elusive, there exists $x\in X$ such that $g^x\in Y$. Accordingly, $\overline{x}$ is an element of $\overline{X}=G$ such that $\overline{g}^{\overline{x}}\in\overline{Y}$. Since $G$ is a transitive permutation group with stabilizer $YN/N=\overline{Y}$, it follows that $G$ is elusive.
\end{proof}

By~\cite{Giudici2003}, the only quasiprimitive elusive groups are the primitive wreath products $\M_{11}\wr K$, where $\M_{11}$ is a transitive subgroup of $\Sy_{12}$ and $K$ is any transitive permutation group, so the focus of Problem~\ref{ProbClassification} is on non-elusive groups $G$. In this case, Lemma~\ref{LemSplit} implies that the elusive group $N.G$ described in the problem must be a non-split extension of $N$ by $G$, and hence $\mathrm{H}^2(G,N)$ has positive dimension. This necessary condition imposes strong restrictions on the possibilities of $N$ in Problem~\ref{ProbClassification}, for any fixed $G$. Given a pair $(G,N)$, the cohomology machinery available in
\textsc{Magma}~\cite{BCP1997}, for example, can compute all extensions $N.G$, allowing one to determine whether any of them yield elusive groups.

Let $G$ be a quasiprimitive permutation group on a set $\Omega$. If $|\Omega|$ is a power of a prime $p$, then $G$ is $p'$-elusive. The quasiprimitive simple groups with prime power degree are classified in~\cite{Guralnick1983}. Thus, an interesting problem related to Problem~\ref{ProbClassification} is to determine whether such a group $G$ can be simple with degree equal to a power of $p$.
Another special case of $p'$-elusive groups $G$ is when $G$ contains a unique conjugacy class of derangements of prime order, all of which have order $p$. These are the so-called \emph{almost elusive}~\cite[Definition]{BH2022} quasiprimitive permutation groups, whose classification was recently completed in~\cite{Hall2024}. It would therefore be significant to first solve Problem~\ref{ProbClassification} for almost elusive groups $G$.

Finally, in view of the first known integers in $\mathcal{E}$ that are not divisible by $4$ as established by Theorem~\ref{ThmExample2}, we pose the following question.

\begin{problem}\label{QuesSmallDegree}
What is the smallest odd integer in $\mathcal{E}$ and what is the smallest integer in $\mathcal{E}$ that is congruent to $2$ modulo $4$?
\end{problem}

\section*{Acknowledgments}

We thank the referee for careful and insightful review of our manuscript.
Chen was supported by the National Natural Science Foundation of China (12101518) and by the Fundamental Research Funds for the Central Universities (20720210036, 20720240136). Lee was supported by an Australian Research Council Discovery Early Career Researcher Award (project number DE230100579). Part of the work was done during Mitrović's visit to Lee at Monash University and Xia at the University of Melbourne, supported by the Marsden Fund of New Zealand Grant 21-UOA-174 and University of Auckland's DRDF Grant.  O’Brien was supported by the Marsden Fund of New Zealand Grant 23-UOA-080 and by a Research Award of the Alexander von Humboldt Foundation. Xia was supported by ARC Discovery Project DP250104965.


\begin{thebibliography}{}

\bibitem{AAS2019}
M.~Arezoomand, A.~Abdollahi and P.~Spiga,
On problems concerning fixed-point-free permutations and on the polycirculant conjecture---a survey,
\textit{Trans.~Comb.}, 8 (2019), no.~1, 15--40.

\bibitem{AG2021}
M.~Arezoomand and M.~Ghasemi,
On $2$-closed elusive permutation groups of degrees $p^2q$ and $p^2qr$,
\textit{Comm.~Algebra}, 49 (2021), no.~2, 614--620.

\bibitem{BM1968}
H.~Behr and J.~Mennicke,
A presentation of the groups $\mathrm{PSL}(2,p)$,
\textit{Canadian J.~Math.}, 20 (1968), 1432--1438.

\bibitem{BCP1997}
W. Bosma, J. Cannon and C. Playoust,
The Magma algebra system I: The user language,
\textit{J.~Symbolic Comput.}, 24 (1997), no.~3-4, 235--265.

\bibitem{BH2022}
T.~C.~Burness and E.~V.~Hall,
Almost elusive permutation groups,
\textit{J.~Algebra}, 594 (2022), 519--543.

\bibitem{Burnside1897}
W.~Burnside,
\textit{Theory of groups of finite order}, Cambridge University press, London, 1897.

\bibitem{Cameron1997}
P.~J.~Cameron (ed.),
Research problems---Problems from the Fifteenth British Combinatorial Conference,
\textit{Discrete Math.}, 167/168 (1997), 605--615.

\bibitem{CGJKKMN2002}
P.~J.~Cameron, M.~Giudici, G.~Jones, W.~M.~Kantor, M.~Klin, D.~Maru\v{s}i\v{c} and L.~A.~Nowitz,
Transitive permutation groups without semiregular subgroups,
\textit{J.~London Math.~Soc.~(2)}, 66 (2002), 325--333.

\bibitem{CCNPW1985}
J.~H.~Conway, R.~T.~Curtis, S.~P.~Norton, R.~A.~Parker and R.~A.~Wilson,
\textit{Atlas of finite groups: maximal subgroups and ordinary characters for simple groups}, Clarendon Press, Oxford, 1985.

\bibitem{DS2005}
F.~Diamond and J.~Shurman,
\textit{A first course in modular forms}, Grad.~Texts in Math., 228, Springer-Verlag, New York, 2005.

\bibitem{DM2011}
E.~Dobson and D.~Maru\v{s}i\v{c},
On semiregular elements of solvable groups,
\textit{Comm.~Algebra}, 39 (2011) 1413--1426.

\bibitem{FKS1981}
B.~Fein, W.~M.~Kantor and M.~Schacher,
Relative Brauer groups II,
\textit{J.~Reine Agnew.~Math.}, 328 (1981), 39--57.

\bibitem{Giudici2003}
M.~Giudici,
Quasiprimitive groups with no fixed point free elements of prime order,
\textit{J.~London Math.~Soc.~(2)}, 67 (2003), 73--84.

\bibitem{Giudici2007}
M.~Giudici,
New constructions of groups without semiregular subgroups,
\textit{Comm.~Algebra}, 35 (2007), no.~9, 2719--2730.

\bibitem{GK2009}
M.~Giudici and S.~Kelly,
Characterizing a family of elusive permutation groups,
\textit{J.~Group Theory}, 12 (2009) 95--105.

\bibitem{GMPV2015}
M.~Giudici, L.~Morgan, P.~Poto\v{c}nik, G.~Verret,
Elusive groups of automorphisms of digraphs of small valency,
\textit{European J.~Combin.}, 46 (2015), 1--9.

\bibitem{Gorenstein1980}
D.~Gorenstein,
\textit{Finite groups}, Second edition, Chelsea Publishing Co., New York, 1980.

\bibitem{Guralnick1983}
R.~M.~Guralnick,
Subgroups of prime power index in a simple group,
\textit{J.~Algebra}, 81 (1983), no.~2, 304--311.

\bibitem{Hall2024}
E.~V.~Hall,
The quasiprimitive almost elusive groups,
\textit{Internat.~J.~Algebra Comput.}, 34 (2024), no.~5, 739--777.

\bibitem{Huppert2025}
B.~Huppert,
\textit{Finite Groups I}, Translated by C.~A.~Schroeder, Grundlehren der mathematischen Wissenschaften, 364, Springer Cham, 2025.

\bibitem{Jordan1872}
C.~Jordan,
Recherches sur les substitutions,
\textit{J.~Math.~Pures Appl.~(Liouville)}, 17 (1872), 351--367.

\bibitem{Jordan1988}
D.~Jordan,
Eine Symmetrieeigenschaft von Graphen,
in: \textit{Graphentheorie und ihre Anwendungen (Stadt Wehlen, 1988)}, 17--20 (in German).

\bibitem{KL1990}
P. B. Kleidman and M. W. Liebeck,
\textit{The subgroup structure of the finite classical groups}, Cambridge University Press, Cambridge, 1990.

\bibitem{KP1993I}
A.~Kleshchev and A.~Premet,
On second degree cohomology of symmetric and alternating groups,
\textit{Comm.~Algebra}, 21 (1993), no.~2, 583--600.

\bibitem{KP1993II}
A.~Kleshchev and A.~Premet,
Corrigenda: ``On second degree cohomology of symmetric and alternating groups'',
\textit{Comm.~Algebra}, 21 (1993), no.~9, 3401.

\bibitem{Klin1998}
M.~Klin,
On transitive permutation groups without semi-regular subgroups,
\textit{ICM 1998: International Congress of Mathematicians, Berlin, 18--27 August 1998}, Abstracts of short communications and poster sessions, 1998, 279.

\bibitem{Marusic1981}
D.~Maru\v{s}i\v{c},
On vertex symmetric digraphs,
\textit{Discrete Math.}, 36 (1981) 69--81.

\bibitem{MS1998}
D.~Maru\v{s}i\v{c} and R.~Scapellato,
Permutation groups, vertex-transitive digraphs and semiregular automorphisms,
\textit{European J.~Combin.}, 19 (1998), no.~6, 707--712.

\bibitem{McQuillan1965}
D.~L.~McQuillan,
Classification of normal congruence subgroups of the modular group.
\textit{Amer.~J.~Math.}, 87 (1965), 285--296.

\bibitem{Serre2003}
J.-P.~Serre,
On a theorem of Jordan,
\textit{Bull.~Amer.~Math.~Soc.}, 40 (2003), 429--440.

\bibitem{UoGVAG2012}
University of Georgia VIGRE Algebra Group,
Second cohomology for finite groups of Lie type,
\textit{J.~Algebra}, 360 (2012), 21--52.

\bibitem{Wielandt1969}
H.~Wielandt,
Permutation groups through invariant relations and invariant functions,
\textit{Lecture Notes}, Ohio State University, 1969.

\bibitem{Xu2009}
J.~Xu,
On elusive permutation groups of square-free degree,
\textit{Comm.~Algebra}, 37 (2009), no.~9, 3200--3206.

\bibliographystyle{100}
\end{thebibliography}
\end{document}